\newlength{\wideitemsep}
\let\olditem\item
\renewcommand{\item}{\setlength{\itemsep}{\wideitemsep}\olditem}
\numberwithin{equation}{section}
\renewcommand{\geq}{\geqslant}
\renewcommand{\leq}{\leqslant}
\newcommand{\lra}{\longrightarrow}
\newtheorem*{rep@theorem}{\rep@title}
\newcommand{\newreptheorem}[2]{%
\newenvironment{rep#1}[1]{%
 \def\rep@title{#2 \ref{##1}}%
 \begin{rep@theorem}}%
 {\end{rep@theorem}}}
\newtheorem{theorem}{Theorem}[section]
\newtheorem{lemma}[theorem]{Lemma}
\newtheorem{proposition}[theorem]{Proposition}
\theoremstyle{definition}
\theoremstyle{remark}
\newtheorem{remark}[theorem]{Remark}
\newcommand{\cA}{\mathcal{A}}
\newcommand{\cE}{\mathcal{E}}
\newcommand{\cI}{\mathcal{I}}
\newcommand{\cO}{\mathcal{O}}
\newcommand{\cP}{\mathcal{P}}
\newcommand{\cL}{\mathcal{L}}
\newcommand{\cM}{\mathcal{M}}
\newcommand{\XX}{X \times_S X}
\newcommand{\wf}{Weierstra{\ss} fibration\ }
\DeclareMathOperator{\coh}{Coh}
\DeclareMathOperator{\pic}{Pic}
\DeclareMathOperator{\Hom}{Hom}
\DeclareMathOperator{\ch}{ch}
\DeclareMathOperator{\id}{id}
\DeclareMathOperator{\Ext}{Ext}
\DeclareMathOperator{\rank}{rank}
\DeclareMathOperator{\shom}{\mathcal{H}\!\textit{om}\,}
\DeclareMathOperator{\sext}{\mathcal{E}\!\textit{xt}\,}
\DeclareMathOperator{\codim}{codim}
\newcommand{\wh}{\widehat}
\newcommand{\Coh}{\mathrm{Coh}}
\newcommand{\Lm}{{L_m}}
\begin{document}

\title{Duality Spectral Sequences for Weierstrass Fibrations and Applications}

\author[Jason Lo]{Jason Lo}
\address{Department of Mathematics, California State University Northridge, 18111 Nordhoff Street, Northridge CA 91330, USA}
\email{jason.lo@csun.edu}
\urladdr{http://sites.google.com/site/chiehcjlo}

\author{Ziyu Zhang}
\address{Institute of Algebraic Geometry, Leibniz University Hannover, Welfengarten 1, 30167 Hannover, Germany}
\email{zhangzy@math.uni-hannover.de}
\urladdr{http://ziyuzhang.github.io}

\keywords{Weierstra{\ss} fibration, duality spectral sequence, Fourier-Mukai tranform, stability}
\subjclass[2010]{Primary 14D20; Secondary: 14J30, 14J32, 14J60}

\begin{abstract}
We study  duality spectral sequences for Weierstra{\ss} fibrations. Using these  spectral sequences, we show that on a $K$-trivial Weierstra{\ss} threefold over a $K$-numerically trivial surface, any line bundle of nonzero fiber degree is taken by a Fourier-Mukai transform to a slope stable locally free sheaf.
\end{abstract}

\maketitle

\section{Introduction}

Given two smooth projective varieties $X$ and $Y$ related by a Fourier-Mukai transform $\Phi : D^b(X) \to D^b(Y)$, it is usually not the case that a slope stable coherent sheaf $E$ on $X$ is taken by $\Phi$ to a slope stable coherent sheaf on $Y$.  It is therefore natural to ask: under what circumstances does this happen? 

Yoshioka gave examples and counterexamples to the above question on Abelian and K3 surfaces \cite{YosNote}.  On threefolds, Bridgeland-Maciocia showed in \cite[Theorem 1.4]{BMef} that, if $X$ and $Y$ are dual elliptic threefolds with a corresponding Fourier-Mukai transform $\Phi : D^b(X) \to D^b(Y)$, then for any rank-one torsion-free sheaf $E$ on $X$, there is a fixed line bundle $L$ (depending only on $\ch_1(E)$) such that $E \otimes L$ is taken by $\Phi$ to a torsion-free sheaf that is Gieseker stable with respect to a polarisation $\omega$ on $Y$, where $\omega$ depends on $\ch(E)$.

The main result of this paper is as follows:

\textbf{Theorem \ref{theorem:main1}}.
\textit{Suppose $p : X \to S$ is a  Weierstra{\ss} threefold where $X$ is $K$-trivial and $K_S$ is numerically trivial.    Then for any ample class $\omega$ on $X$ and any line bundle $M$ on $X$ of nonzero fiber degree, the Fourier-Mukai transform of $M$ (up to a shift) is a $\mu_\omega$-stable locally free sheaf.}

A main ingredient in the proof of Theorem \ref{theorem:main1} is a pair of spectral sequences on the interplay between a Fourier-Mukai transform and the derived dual functor on elliptic threefolds, discussed in Section \ref{sec:specseq}.  These spectral sequences are   inspired by analogues on Abelian threefolds due to Maciocia-Piyaratne \cite[Proposition 4.2]{MacPir1}.  

We  note that some of the techniques in this paper have also appeared in the work of Oberdieck-Shen \cite{OS1}.  In their work,  they study the transform of stable pairs (in the sense of Pandharipande-Thomas \cite{PT}) under an autoequivalence of the derived category of an elliptic threefold, thereby giving a partial proof to the modulairty conjecture on PT invariants due to Huang-Katz-Klemm.

The paper is organized as follows: in Section \ref{sec2} we recall the Fourier-Mukai functor for Weierstra{\ss} fibrations and the dualizing functor, as well as Theorem \ref{thm:commutativity} that establishes the commutativity of the two functors. In Section \ref{sec:specseq} we apply Theorem \ref{thm:commutativity} and the technique of spectral sequences to study the behavior of sheaves under Fourier-Mukai transforms.  In Section \ref{sec4}, we give a criterion for reflexive sheaves to be taken to locally free sheaves by a Fourier-Mukai transform, thus paving the way for the proof of  the main result of this paper,   Theorem \ref{theorem:main1}.

\textbf{Acknowledgements.} We would like to thank Arend Bayer, Antony Maciocia and Dulip Piyaratne for valuable discussions that led to the conception of this project.  The first author would also like to thank the Institute of Algebraic Geometry at Leibniz University  Hannover for their hospitality and support during the final stage of this project in May-June 2017. The second author would like to thank Alastair Craw for his support via EPSRC grant EP/J019410/1 and the support from Leibniz University Hannover during various stages of the work.

\section{Commutativity of Fourier-Mukai and dualizing functors}\label{sec2}

In this section, all functors are derived unless otherwise specified.

\subsection{The Fourier-Mukai functor}

We fix some notations first. Let
\begin{equation}
\label{eqn:fibxs}
p: X \lra S
\end{equation}
be a \wf in the sense of \cite[Definitions 6.8, 6.10]{FMNT}. The fibration has a section $\sigma: S \to X$ whose image lies in the smooth locus of the morphism $p$. We write $\Theta = \sigma(S)$ and $h = p|_{\Theta}: \Theta \to S$. Then $h$ is clearly an isomorphism.

In the case of our interest, we further assume that both $X$ and $S$ are smooth. We label the relevant morphisms as in the following fiber diagram

\begin{equation}
\xymatrix{
\XX \ar[r]^-{\pi_2} \ar[d]_-{\pi_1} \ar[rd]^-{\pi} & X \ar[d]^p \\
X \ar[r]_p & S.
}
\end{equation}

We define an integral functor whose kernel is given by the sheaf
\begin{equation}
\label{eqn:FM-kernel}
\cP = \cI_{\Delta} \otimes \pi_1^*\cO_X(\Theta) \otimes \pi_2^*\cO_X(\Theta) \otimes \pi^*\omega^{-1}
\end{equation}
where $\cI_{\Delta}$ is the ideal sheaf of the image $\Delta$ of the diagonal morphism $\delta: X \hookrightarrow \XX$, and
$$ \omega = R^1p_*\cO_X $$
as introduced in \cite[p.191, l.2]{FMNT}.

Then we have the following result:

\begin{theorem}
\label{thm:FM-formula}
The integral functor $\Phi: D^b(X) \to D^b(X)$ given by
\begin{equation}
\label{eqn:FM-formula}
\Phi(\cE) := \pi_{2*}(\pi_1^* \cE \otimes \cP)
\end{equation}
is well-defined and an equivalence of categories.
\end{theorem}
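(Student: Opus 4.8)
The plan is to first check that $\Phi$ lands in the bounded derived category, and then to establish the equivalence by reducing to a fiberwise statement over $S$.

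For well-definedness, note that $\cP$ is a coherent sheaf on $\XX$ and that both projections are proper, since $p$ is projective. The restriction of $\cP$ to each slice $\{x\} \times_S X \cong X_{p(x)}$ is a rank-one torsion-free sheaf of constant Euler characteristic on the one-dimensional fiber, so $\cP$ is flat over $X$ via $\pi_1$; consequently $\pi_1^* \cE \otimes \cP$ is bounded for every $\cE \in D^b(X)$. Pushing forward along the proper morphism $\pi_2$, whose fibers are one-dimensional, preserves bounded coherent complexes. Hence $\Phi(\cE) \in D^b(X)$ and $\Phi$ is well-defined.

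To prove that $\Phi$ is an equivalence, I would invoke the criterion that a relative integral functor along a fibration $p : X \to S$ is an equivalence once its restriction to every fiber is an equivalence. Because $\cP$ is flat over $X$ via $\pi_1$, cohomology and base change show that the formation of $\Phi$ commutes with restriction to a fiber $X_s = p^{-1}(s)$; thus it suffices to prove that, for each $s \in S$, the kernel $\cP_s := \cP|_{X_s \times X_s}$ induces an equivalence $\Phi_s : D^b(X_s) \to D^b(X_s)$, where $X_s$ is a Weierstra{\ss} cubic. Here the self-duality of the fibration enters: a \wf carries a section, so its relative compactified Jacobian is canonically identified with $p : X \to S$ itself, and $\cP$ is the corresponding relative Poincar\'e sheaf, so that $\cP_s$ is the universal sheaf parametrizing degree-zero rank-one torsion-free sheaves on $X_s$.

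On a smooth fiber, $X_s$ is an elliptic curve, $\cP_s$ is the classical Poincar\'e sheaf, and the fiberwise equivalence is Mukai's theorem. The main obstacle is the singular fibers --- the nodal and cuspidal Weierstra{\ss} cubics --- where the dual-abelian-variety picture breaks down and one must instead use the extension of Fourier-Mukai theory to integral genus-one curves, together with the identification of the compactified Jacobian of such a curve with the curve itself, to conclude that $\Phi_s$ is an equivalence. Assembling the fiberwise equivalences through the relative criterion then yields that $\Phi$ is a global equivalence; alternatively, one checks directly that the transpose kernel provides a quasi-inverse, so that the two composite integral functors are isomorphic to shifts of the identity. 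All of these ingredients are worked out for Weierstra{\ss} fibrations in \cite{FMNT}, whose treatment I would follow.
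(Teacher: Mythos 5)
Your proposal is correct and, in substance, coincides with the paper's proof, which consists of nothing more than a citation of \cite[Theorem 6.18]{FMNT} and \cite[Theorem 2.12]{BK06}: the argument you sketch --- well-definedness from $\pi_1$-flatness of the kernel (this flatness is exactly \cite[Proposition 6.15]{FMNT}) together with properness of the projections, and the equivalence by reduction to fibers, using Mukai's theorem on smooth fibers and the theory of rank-one torsion-free sheaves and compactified Jacobians on the nodal and cuspidal Weierstra{\ss} cubics --- is a faithful outline of how the cited proofs actually proceed. The only step worth flagging as a genuine black box is the fiberwise criterion you invoke (that a relative integral functor is an equivalence once its restriction to every fiber is one): this is not a formality but a nontrivial theorem, due to Hern\'andez Ruip\'erez, L\'opez Mart\'in and Sancho de Salas, which is indeed available in the generality needed here, so your reduction is legitimate provided you cite it (or fall back on your alternative of exhibiting the transpose kernel as an explicit quasi-inverse, which is closer to the route taken in \cite{BK06}).
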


\begin{proof}
This is \cite[Theorem 6.18]{FMNT} or \cite[Theorem 2.12]{BK06}.
\end{proof}

\begin{remark}
We point out that the Fourier-Mukai kernel \eqref{eqn:FM-kernel} is the one defined in \cite[Definition 6.14]{FMNT}. A slightly different Fourier-Mukai kernel, with the last factor $\pi^*\omega^{-1}$ in \eqref{eqn:FM-kernel} being omitted, was used in \cite{BK06}. Since they only differ by a line bundle pulled back from the base $S$, most of our discussion is valid regardless of which kernel we choose. We will be precise when the choice of the kernel does affect the computation. \qed
\end{remark}

The Fourier-Mukai transform $\Phi$ defined in the above theorem will be one of the main functors that we will consider. Now we look at the other functor that will play a key role.

\subsection{The dualizing functor}

Let $Z$ be a Gorenstein variety and $\cE \in D^b(Z)$. We define
$$ \Delta_Z(\cE) := \shom_{\cO_Z}(\cE, \cO_Z). $$
It is familiar that when $Z$ is smooth, we have
\begin{equation}
\label{eqn:dual-bounded}
\Delta_Z(\cE) \in D^b(Z)
\end{equation}
and
\begin{equation}
\label{eqn:dual-dual}
(\Delta_Z \circ \Delta_Z)(\cE)=\cE.
\end{equation}
In fact, both relations still hold when $Z$ is Gorenstein; see for example \cite[Example 3.20]{Nee10}.

We will use the dualizing functor on $X$ and $\XX$. Notice that $\XX$ is in general singular, but still Gorenstein. Indeed, the projection morphism $\pi_1: \XX \to X$ is a base change of the morphism $p: X \to S$. Since $p: X \to S$ is flat with Gorenstein fibers, the same is true for $\pi_1: \XX \to X$, hence they are both Gorenstein morphisms; see \cite[p.349, l.5]{FMNT}. It follows that $\XX$ is Gorenstein by transitivity \cite[Proposition C.1.1]{FMNT}. Hence both \eqref{eqn:dual-bounded} and \eqref{eqn:dual-dual} hold for $X$ and $\XX$.

\subsection{The commutativity of functors}

Now we discuss the commutativity of the Fourier-Mukai functor and the dualizing functors. Recall that for the Weierstra{\ss} fibration \eqref{eqn:fibxs}, an $S$-involution is a morphism $\iota: X \to X$ satisfying $\iota \circ \iota = \id_X$ and $p \circ \iota = p$. The key result concerning the commutativity of functors is the following:

\begin{theorem}
\label{thm:commutativity}
There exists a line bundle $L \in \pic(S)$ and an $S$-involution $\iota: X \to X$, such that for any $\cE \in D^b(X)$, we have
\begin{equation}
\label{eqn:2comp}
(\Delta_X \circ \Phi) (\cE) = \iota^* (\Phi \circ \Delta_X) (\cE) \otimes p^*L [1].
\end{equation}
\end{theorem}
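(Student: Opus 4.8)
The plan is to reduce the identity \eqref{eqn:2comp} to an isomorphism of Fourier--Mukai kernels and then to verify that isomorphism by a fibrewise seesaw computation. First I would apply Grothendieck--Verdier duality to the proper morphism $\pi_2 : \XX \to X$. Since $\pi_2$ is a base change of the Gorenstein fibration $p$, it is Gorenstein of relative dimension one, so $\pi_2^{!}\cO_X \cong \omega_{\pi_2}[1]$ with $\omega_{\pi_2}$ a line bundle; moreover base change along $p$ identifies $\omega_{\pi_2} \cong \pi_1^*\omega_{X/S} = \pi^*\omega^{-1}$, a pullback from $S$. Writing $\cP = \cI_{\Delta}\otimes\cN$ with $\cN = \pi_1^*\cO_X(\Theta)\otimes\pi_2^*\cO_X(\Theta)\otimes\pi^*\omega^{-1}$ a line bundle, relative duality together with the projection formula and the fact that $\pi_1^*\cE$ is perfect gives
\begin{equation}
(\Delta_X\circ\Phi)(\cE)\cong\pi_{2*}\bigl(\pi_1^*\Delta_X(\cE)\otimes\cP^{\vee}\otimes\omega_{\pi_2}\bigr)[1],
\end{equation}
where $\cP^{\vee}=\srhom_{\XX}(\cP,\cO_{\XX})$. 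Thus the whole statement follows once I produce a line bundle $L\in\pic(S)$ and an $S$-involution $\iota$ with $\cP^{\vee}\otimes\omega_{\pi_2}\cong\tilde\iota^*\cP\otimes\pi^*L$, where $\tilde\iota=\id\times_S\iota$: indeed $\pi_1\circ\tilde\iota=\pi_1$, while the commuting square $\pi_2\circ\tilde\iota=\iota\circ\pi_2$ yields $\iota^*\pi_{2*}=\pi_{2*}\tilde\iota^*$, and the projection formula absorbs $\pi^*L$ into $p^*L$, converting the displayed kernel into $\iota^*(\Phi\circ\Delta_X)(\cE)\otimes p^*L[1]$.

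Next I would compute $\cP^{\vee}$. Because $\delta : X\hookrightarrow\XX$ is a codimension-one closed immersion of the smooth variety $X$ into the Gorenstein variety $\XX$, Grothendieck duality for $\delta$ gives $\srhom_{\XX}(\cO_{\Delta},\cO_{\XX})\cong\delta_*\omega_{\delta}[-1]$ for a line bundle $\omega_{\delta}$ on $X$. Dualizing the structure sequence $0\to\cI_{\Delta}\to\cO_{\XX}\to\cO_{\Delta}\to0$ and reading off cohomology sheaves then shows that $\srhom_{\XX}(\cI_{\Delta},\cO_{\XX})$ is concentrated in degree zero, so $\cP^{\vee}$ is a genuine reflexive rank-one sheaf rather than a complex; this is also what pins the shift in \eqref{eqn:2comp} to be exactly $[1]$. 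I would record that $\XX$ is normal: its singular locus, the points where both factors sit at singular points of the same fibre, has codimension two, so $R_1+S_2$ applies. Consequently $\cP$, $\cP^{\vee}$, $\tilde\iota^*\cP$ and all the line-bundle twists are reflexive, and the desired kernel isomorphism may be checked on the smooth locus $\XX^{\circ}$ (the complement of a codimension-two set) and then extended by reflexivity.

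On $\XX^{\circ}$ every fibre is a smooth elliptic curve, and $\Delta$ together with the graph $\Delta_{\iota}$ of the fibrewise inversion is Cartier, so the identification reduces to the classical Poincar\'e-sheaf duality on an elliptic curve, $\srhom(\cI_{\Delta},\cO)\cong\cO(\Delta)$, combined with the relative seesaw / theorem-of-the-cube relation $\cO(\Delta)\otimes\cO(\Delta_{\iota})\cong\pi_1^*\cO(2\Theta)\otimes\pi_2^*\cO(2\Theta)$ up to a twist pulled back from $S$. Taking $\iota$ to be the elliptic involution fixing $\Theta$ (so that $\iota^*\cO(\Theta)\cong\cO(\Theta)$ modulo $p^*\pic(S)$) and collecting the contributions of $\cN$, $\omega_{\pi_2}=\pi^*\omega^{-1}$, and the cube relation, I would check that every remaining twist is a pullback from $S$ and that they combine into a single $L\in\pic(S)$, giving $\cP^{\vee}\otimes\omega_{\pi_2}\cong\tilde\iota^*\cP\otimes\pi^*L$ and hence \eqref{eqn:2comp}.

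The step I expect to be the main obstacle is the passage across the discriminant: one must guarantee that the derived dual $\cP^{\vee}$ carries no higher cohomology, so that the argument stays at the level of sheaves, and that the fibrewise Poincar\'e/seesaw identity, transparent over $\XX^{\circ}$, genuinely propagates over the singular fibres. Normality of $\XX$ and the reflexivity of all the sheaves involved are exactly what make this propagation legitimate; the subsequent bookkeeping, verifying that no stray non-pullback twist survives so that a single $L\in\pic(S)$ suffices, is then routine once the geometry of $\Theta$, $\Delta$ and $\Delta_{\iota}$ is fixed.
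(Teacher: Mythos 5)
Your overall strategy is sound, and it is worth noting how it sits relative to the paper: the paper's own ``proof'' of Theorem \ref{thm:commutativity} is simply a citation of \cite[Corollary 3.3]{BK06} (with a matching of notation), and what you are reconstructing is essentially the argument of that reference — your kernel-level identity $\cP^{\vee}\otimes\omega_{\pi_2}\cong\tilde{\iota}^{*}\cP\otimes\pi^{*}L$ is exactly \cite[Proposition 2.3]{BK06}, which the paper's Remark after the theorem quotes and uses. Your functorial reduction is correct: $\pi_2^{!}\cO_X=\omega_{\pi_2}[1]$ with $\omega_{\pi_2}=\pi_1^{*}\omega_{X/S}=\pi^{*}\omega^{-1}$, the perfectness of $\pi_1^{*}\cE$, the base-change identity $\iota^{*}\pi_{2*}=\pi_{2*}\tilde{\iota}^{*}$ (valid because $\iota$ and $\tilde{\iota}$ are involutions), and the projection formula do reduce \eqref{eqn:2comp} to the kernel isomorphism; likewise, your use of duality for the codimension-one embedding $\delta$ into the Gorenstein scheme $\XX$ correctly shows that $\cP^{\vee}$ is a sheaf, which pins down the shift $[1]$.

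The genuine gap is in your third paragraph: the claim that ``on $\XX^{\circ}$ every fibre is a smooth elliptic curve'' is false, and the fallback to normality plus reflexivity does not repair it. The singular locus of $\XX$ consists only of pairs $(x_1,x_2)$ where \emph{both} points are singular points of the same fibre, so the smooth locus $\XX^{\circ}$ contains almost all of $\pi^{-1}(\mathrm{disc})$; the locus in $\XX^{\circ}$ where the fibre of $\pi$ is singular is therefore a \emph{divisor}, not a codimension-two set. Your seesaw/theorem-of-the-cube computation thus establishes the kernel isomorphism only over $\pi^{-1}(S\setminus\mathrm{disc})$, whose complement has codimension one, and reflexive sheaves on a normal variety extend isomorphisms across codimension $\geq 2$ loci, never across divisors — so ``propagation across the discriminant'' is precisely what your argument does not deliver. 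Concretely, after the seesaw step the two sides are line bundles on $\XX^{\circ}$ agreeing off $\pi^{-1}(\mathrm{disc})$, hence differing by $\cO(D)$ for some divisor $D$ supported on $\pi^{-1}(\mathrm{disc})\cap\XX^{\circ}$, and a priori this twist need not be of the form $\pi^{*}L$, in which case \eqref{eqn:2comp} would fail. The missing step is to show that $D$ is a pullback: this does hold, because Weierstrass fibres are irreducible and reduced and $\pi$ is flat, so each $\pi^{-1}(C_i)$ (for $C_i$ a component of the discriminant) is an irreducible reduced divisor equal to $\pi^{*}C_i$, whence $D=\pi^{*}(\textstyle\sum a_iC_i)$ can be absorbed into $L\in\pic(S)$; alternatively one can verify the fibrewise identity directly over nodal and cuspidal fibres, as is done in \cite{BK06}. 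A smaller omission of the same kind: the existence of the $S$-involution $\iota$ as a regular morphism on all of $X$ (not merely fibrewise inversion over the smooth fibres) is asserted rather than proved; for a \wf it follows from $y\mapsto -y$ in Weierstrass coordinates, and it is part of what the theorem claims.
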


\begin{proof}
This is \cite[Corollary 3.3]{BK06}. Indeed, we only need to observe that our dualizing functor $\Delta_X$ is a special case of the dualizing functor there by setting $\cL = \cO_S$ and denote $\cM^\vee \otimes \cA$ by $L$ in the formulation of \cite[Corollary 3.3]{BK06}. From earlier discussion in \cite{BK06} we know that $L$ is in fact a line bundle.
\end{proof}

This is a very convenient tool for our discussion in next section. As pointed out in \cite[p.301, l.25]{BK06}, this result generalize the classical result of Mukai in \cite[(3.8)]{Muk81}. Before discussing its applications we make some remarks concerning $L$ and $\iota$ in the statement of the theorem.

\begin{remark}
For our purpose it will be not at all relevant to know what $L$ precisely is. However it can be computed explicitly, which depends on the choice of the Fourier-Mukai kernel. We explain it very briefly using the kernel $\cP$ defined in \eqref{eqn:FM-kernel}. We know that $L = \cM^\vee \otimes \cA$ in the notation of \cite[Corollary 3.3]{BK06}. We just need to compute $\cM$ and $\cA$.

The definition of $\cA$ is given in \cite[p.300, l.14]{BK06} by
$$ \omega_{X/B} = p^* \cA. $$
We use the relation \cite[(6.5)]{FMNT} to conclude that $\cA = \omega^{-1}$.

The definition of $\cM$ is given in \cite[Proposition 2.3]{BK06}. To compute $\cM$ we consider the inclusion
$$\sigma \times \sigma: \Theta \times_S \Theta \hookrightarrow \XX.$$
Under the canonical isomorphism $\Theta \times_S \Theta \cong \Theta$ we know that $\pi \circ (\sigma\times\sigma)=p \circ \sigma = h$ which is an isomorphism from $\Theta$ to $S$. We can pull back the equation in \cite[Proposition 2.3]{BK06} along the inclusion $\sigma \times \sigma$ to get
$$ (\sigma \times \sigma)^*(\id_X \times \iota)^*\cP = (\sigma \times \sigma)^*\cP^\vee \otimes (\sigma \times \sigma)^*\pi^* \cM. $$
We need to compute the two factors in this equation.

By the above discussion we see that
$$(\sigma \times \sigma)^*\pi^* \cM = h^* \cM.$$
By \cite[(4)]{BK06} and the proof of \cite[Proposition 2.3]{BK06} we can see that
$$\iota \circ \sigma = \sigma,$$
therefore we have
$$ (\sigma \times \sigma)^*(\id_X \times \iota)^*\cP = ((\sigma \circ \id_X) \times (\iota \circ \sigma))^*\cP = (\sigma \times \sigma)^*\cP. $$
We observe that
\begin{align*}
(\sigma \times \sigma)^*\cI_{\Delta} &= (\id_\Theta \circ \sigma)^*(\sigma \circ \id_X)^*\cI_{\Delta} \\
&= (\id_\Theta \circ \sigma)^* \cI_{\Theta \times_S X \subset \XX} \\
&= \sigma^* \cI_{\Theta \subset X} \\
&= \sigma^* \cO_X(-\Theta)
\end{align*}
where the second equality uses the flatness of $\cI_\Delta$ with respect to the first projection $\pi_1$ proved in \cite[Proposition 6.15]{FMNT}. Therefore by the definition \eqref{eqn:FM-kernel} we  see that
\begin{align*}
(\sigma \times \sigma)^*\cP &= (\sigma \times \sigma)^*\cI_{\Delta} \ \otimes\ (\sigma \times \sigma)^*\pi_1^*\cO_X(\Theta) \ \otimes\ (\sigma \times \sigma)^*\pi_2^*\cO_X(\Theta) \ \otimes \ (\sigma \times \sigma)^*\pi^*\omega^{-1} \\
&= \sigma^* \cO_X(-\Theta) \otimes \sigma^* \cO_X(\Theta) \otimes \sigma^* \cO_X(\Theta) \otimes h^*\omega^{-1} \\
&= \sigma^* \cO_X(\Theta) \otimes h^*\omega^{-1}
\end{align*}
which is precisely the normal bundle of the section $\Theta$ in $X$. However the conormal bundle of $\Theta$ in $X$ is given by
$$ \sigma^* \omega_{X/S} = \sigma^* p^* \omega^{-1} = h^* \omega^{-1}, $$
where the first equality follows from \cite[(6.5)]{FMNT}. Therefore 
$$ (\sigma \times \sigma)^*\cP = h^*\omega \otimes h^*\omega^{-1} = \cO_{\Theta}. $$
Moreover  $\Theta \times_S \Theta$ lies in the smooth locus of $\XX$ on which $\cP$ is locally free. Hence 
$$ (\sigma \times \sigma)^*\cP^\vee = \cO_{\Theta}. $$
In summary, we have
$$ \cO_{\Theta} = \cO_{\Theta} \otimes h^*\cM. $$
Since $h: \Theta \to S$ is an isomorphism, we conclude 
$$ \cM = \cO_S. $$
Together with the computation of $\cA$ we get
$$ L = \cM^\vee \otimes \cA = \omega^{-1}. $$

The precise formula for $L$ depends on the choice of the Fourier-Mukai kernel $\cP$. If we take the Fourier-Mukai kernel used in \cite{BK06}, then the line bundle $L$ would be $\omega^{-3}$, which can be calculated in exactly the same way as above. However since the formula for $L$ will be irrelevant to our application, we will simply write $L$ instead of its explicit form. \qed
\end{remark}

\begin{remark}
Since the morphism $\iota$ is a morphism of $S$-schemes, it actually gives an involution on $\iota_s: X_s \to X_s$ for each fiber $X_s = p^{-1}(s)$. When $X_s$ is smooth, then $\iota_s$ is precisely the inverse operation with respect to the group law on $X_s$ with the neutral point given by $\sigma(s)$. For the description of $\iota_s$ for singular fibers, we refer to \cite[Remark 2.4]{BK06} and the reference cited there. \qed
\end{remark}

\section{Spectral sequences and applications}\label{sec:specseq}

In this section we will obtain some consequences of Theorem \ref{thm:commutativity} for the Weierstra{\ss} fibration \eqref{eqn:fibxs}. More precisely, we will apply spectral sequences on both sides of \eqref{eqn:2comp} and obtain some identities by comparing their cohomology. These identities will be used in later sections to determine the behavior of line bundles under Fourier-Mukai transforms.

For simplicity, given any $E \in \coh(X)$, the $i$-th cohomology sheaf of $\Phi(E)$ is denoted by $\Phi^iE$. We also write $n$ for $\dim X$. All functors in the spectral sequences are underived.

\subsection{The duality spectral sequences}

The composition of derived functors gives spectral sequences. The identity \eqref{eqn:2comp} in Theorem \ref{thm:commutativity} shows that two spectral sequences converge to the same limit. More precisely, for any $E \in \coh(X)$, the $E_2$-page of the spectral sequence corresponding to the composition functor $\Delta_X \circ \Phi$ is given by
$$ E_2^{p,q} = \sext^q_{\cO_X}(\Phi^{-p}E, \cO_X) \Longrightarrow E_{\infty}. $$
Notice that the non-trivial region for $(p,q)$ in the $E_2$-page is bounded by $-1 \leq p \leq 0$ and $0 \leq q \leq n$. Similarly, the $E_2$-page of the spectral sequence corresponding to the composition functor $\Phi \circ \Delta_X$ is given by
$$ E_2'^{p,q} = \Phi^q\sext^p_{\cO_X}(E, \cO_X) \Longrightarrow E'_{\infty}, $$
with the non-trivial region for $(p,q)$ in the $E_2$-page bounded by $0 \leq p \leq n$ and $0 \leq q \leq 1$. Taking the shift functor and the involution $\iota$ into consideration, we get the following result

\begin{proposition}
\label{prop:comm}
For any $E \in \coh(X)$, the following two spectral sequences converge to the same limit
$$ E_{2L}^{pq}=\sext^q_{\cO_X}(\Phi^{-p}E, \cO_X) \Longrightarrow E_{\infty} \Longleftarrow \iota^*(\Phi^{q+1}\sext^p_{\cO_X}(E, \cO_X)) \otimes p^*L=E_{2R}^{pq}. $$
Moreover, the non-trivial region for the left hand side is bounded by $-1 \leq p \leq 0$ and $0 \leq q \leq n$, while the non-trivial region for the right hand side is bounded by $0 \leq p \leq n$ and $-1 \leq q \leq 0$.
\end{proposition}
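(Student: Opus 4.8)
The plan is to recognise $E_{2L}$ and $E_{2R}$ as the $E_2$-pages of the two hyper(co)homology spectral sequences attached to the two composite functors in \eqref{eqn:2comp}, and then to match their abutments by means of Theorem \ref{thm:commutativity}.

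First I would treat the left-hand composite $\Delta_X \circ \Phi$. Recall that $\Phi$ sends a coherent sheaf to a complex whose cohomology is concentrated in degrees $0$ and $1$, so $\Phi^{-p}E = 0$ unless $-1 \leq p \leq 0$. Feeding the two-term object $\Phi(E) \in D^b(X)$ into the contravariant derived functor $\Delta_X = \srhom(-, \cO_X)$, the canonical-truncation filtration of $\Phi(E)$ produces the hyperext spectral sequence
$$ \sext^q(\cH^{-p}(\Phi E), \cO_X) = \sext^q(\Phi^{-p}E, \cO_X) \Longrightarrow \cH^{p+q}(\Delta_X\Phi(E)), $$
whose $E_2$-page is exactly $E_{2L}^{pq}$. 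The remaining bound $0 \leq q \leq n$ is the vanishing $\sext^q(-, \cO_X) = 0$ outside $[0,n]$ on the smooth $n$-fold $X$. Note that the contravariance of $\Delta_X$ is precisely what replaces the cohomological index by $-p$ here.

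Next I would treat the right-hand composite $\Phi \circ \Delta_X$. The object $\Delta_X(E)$ has cohomology sheaves $\cH^p(\Delta_X E) = \sext^p(E,\cO_X)$, nonzero only for $0 \leq p \leq n$. Since $\Phi$ is an exact (triangulated) functor, the canonical filtration of $\Delta_X(E)$ yields the hypercohomology spectral sequence
$$ \cH^q\big(\Phi(\sext^p(E,\cO_X))\big) = \Phi^q\sext^p(E,\cO_X) \Longrightarrow \cH^{p+q}(\Phi\Delta_X(E)), $$
where $q \in \{0,1\}$ again by the amplitude of $\Phi$. It remains to transport this across \eqref{eqn:2comp}. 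The functor $G := \iota^*(-)\otimes p^*L$ is exact, as $\iota$ is an isomorphism and $p^*L$ a line bundle, so it carries the previous spectral sequence term-by-term to one with $E_2$-term $\iota^*(\Phi^q\sext^p(E,\cO_X))\otimes p^*L$ abutting to $\cH^{p+q}(G(\Phi\Delta_X E))$. By Theorem \ref{thm:commutativity} we have $G(\Phi\Delta_X E) = \Delta_X\Phi(E)[-1]$; substituting $q \mapsto q+1$ turns the $E_2$-term into $E_{2R}^{pq} = \iota^*(\Phi^{q+1}\sext^p(E,\cO_X))\otimes p^*L$, now abutting to $\cH^{p+q+1}(\Delta_X\Phi(E)[-1]) = \cH^{p+q}(\Delta_X\Phi(E))$, the same graded object as on the left, and with the stated ranges $0 \leq p \leq n$, $-1 \leq q \leq 0$.

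The content here is almost entirely formal; the one point that deserves care is the precise meaning of the two sequences ``converging to the same limit.'' Both abut to the associated graded of the single object $\Delta_X\Phi(E) \in D^b(X)$ with respect to its canonical cohomological filtration, so the assertion is that $E_{2L}$ and $E_{2R}$ are two distinct $E_2$-descriptions of one and the same $E_\infty = \cH^\bullet(\Delta_X\Phi(E))$. I expect the main obstacle to be bookkeeping rather than mathematics: one must verify that applying the exact functor $G$ and the shift $[1]$ genuinely commutes with the formation of the spectral sequence and with the passage to $E_\infty$, so that the $q \mapsto q+1$ reindexing forced by the shift, together with the contravariance-induced $-p$ on the left, lines the two pages up on the common abutment.
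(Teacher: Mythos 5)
Your proof is correct and follows essentially the same route as the paper: both identify $E_{2L}$ and $E_{2R}$ as the hypercohomology/hyperext spectral sequences of the composite functors $\Delta_X\circ\Phi$ and $\Phi\circ\Delta_X$, then use Theorem \ref{thm:commutativity} to identify the abutments, with the exactness of $\iota^*(-)\otimes p^*L$ and a reindexing $q\mapsto q+1$ absorbing the shift $[1]$ (the paper states all this in one sentence, deferring to the preceding discussion). Your explicit bookkeeping of the shift and the contravariance-induced $-p$ is simply a fleshed-out version of the paper's argument.
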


\begin{proof}
The statement follows immediately from Theorem \ref{thm:commutativity} and the above discussion. Notice that the shift in the non-trivial region for the right hand side comes from the shift functor.
\end{proof}

\begin{remark}
For better visualization, we draw the $E_2$-pages of both spectral sequences in the table form. The terms $E_{2L}^{pq}$ are given by
$$\begin{bmatrix}
\cdots & \cdots \\
\sext^3(\Phi^1E, \cO_X) & \sext^3(\Phi^0E, \cO_X) \\
\sext^2(\Phi^1E, \cO_X) & \sext^2(\Phi^0E, \cO_X) \\
\sext^1(\Phi^1E, \cO_X) & \sext^1(\Phi^0E, \cO_X) \\
\shom(\Phi^1E, \cO_X) & \shom(\Phi^0E, \cO_X)
\end{bmatrix},$$
and the terms $E_{2R}^{pq}$, up to a pullback by the involution $\iota$ and tensoring with the line bundle $p^*L$, are given by
$$\begin{bmatrix}
\Phi^1\shom(E, \cO_X) & \Phi^1\sext^1(E, \cO_X) & \Phi^1\sext^2(E, \cO_X) & \Phi^1\sext^3(E, \cO_X) & \cdots \\
\Phi^0\shom(E, \cO_X) & \Phi^0\sext^1(E, \cO_X) & \Phi^0\sext^2(E, \cO_X) & \Phi^0\sext^3(E, \cO_X) & \cdots
\end{bmatrix}.$$
Due to the limited number of rows, we immediately see that the spectral sequence on the right hand side degenerates at $E_2$-page. However the spectral sequence on the left hand side could a priori still have non-trivial arrows in $E_2$-page, but will degenerate at the latest at $E_3$-page. \qed
\end{remark}

These two spectral sequences provide us lots of information between the sheaf $E$ and its dual. More precisely, after stabilization we can compare the anti-diagonals which gives the same term in the limit. In some cases, we know the arrows among the terms $E_{2L}^{pq}$ are also trivial, then we can simply extract information from the $E_2$-pages. Some examples of such analysis are given below. 

\subsection{Applications of the duality spectral sequence}

As an application of Proposition \ref{prop:comm}, we prove the following two interesting statements about sheaves with WIT properties. We point out that in these examples both spectral sequences degenerate at $E_2$-page, because we will see that the first spectral sequence will have only one column of non-trivial terms on the $E_2$-page.

Before we state the first proposition, we collect some facts and notations. Recall that for any $\cE \in D^b(X)$, the dimensions of the supports of $\cE$ and $\Phi(\cE)$ differ at most by $1$; i.e. $\dim \cE -1 \leq \dim \Phi(\cE) \leq \dim \cE +1$. This follows from \cite[Proposition 6.1]{FMNT}.

Moreover, for any $E \in \coh(X)$, it is proven in \cite[9.2]{BMef} that $\Phi^iE=0$ unless $i=0$ or $1$. If we assume further that the support of $E$ is of codimension $c$, then we have that $\sext^i(E, \cO_X)=0$ for all $i<c$ by \cite[Proposition 1.1.6]{HL}. We define the dual sheaf of $E$ by $E^D=\sext^c(E, \cO_X)$.

We will discuss the consequences of the dualizing spectral sequence for each possibility of the difference in the dimensions assuming $E$ is a $\Phi$-WIT$_0$ or $\Phi$-WIT$_1$ sheaf on $X$. Recall that a sheaf $E \in \coh(X)$ is said to be $\Phi$-WIT$_i$ if $\Phi(E)[i] \in \coh(X)$.

\begin{proposition}
\label{prop:wit0}
Assume $E \in \coh(X)$ is $\Phi$-WIT$_0$. Then
\begin{itemize}
\item If $\dim \Phi^0E = \dim E + 1$, then $\iota^*(\Phi^0(E^D)) \otimes p^*L = (\Phi^0E)^D$;
\item If $\dim \Phi^0E = \dim E$, then $\Phi^0(E^D) = 0$; i.e., $E^D$ is $\Phi$-WIT$_1$;
\item The case $\dim \Phi^0E = \dim E - 1$ cannot happen.
\end{itemize}
\end{proposition}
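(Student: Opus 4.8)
The plan is to feed the $\Phi$-WIT$_0$ hypothesis into the two spectral sequences of Proposition \ref{prop:comm} and extract the three cases by comparing their lowest nonvanishing anti-diagonals. First I would note that $E$ being $\Phi$-WIT$_0$ means $\Phi^1 E = 0$, so on the left-hand $E_2$-page only the column $p=0$ carries nonzero terms; the left spectral sequence thus degenerates trivially and its abutment is
\[
E_\infty^m = \sext^m_{\cO_X}(\Phi^0 E, \cO_X).
\]
Writing $c = \codim \supp E$ and $c' = \codim \supp \Phi^0 E$, the three cases become $c'=c-1$, $c'=c$, $c'=c+1$, and the display identifies $E_\infty^m = 0$ for $m<c'$ together with $E_\infty^{c'} = (\Phi^0 E)^D$.

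Next I would locate the right-hand terms in the lowest total degrees and check they support no differential. Since $\sext^p(E)=0$ for $p<c$, the bottom corner is $E_{2R}^{c,-1}=\iota^*(\Phi^0(E^D))\otimes p^*L$, sitting in total degree $c-1$; a direct check shows nothing enters or leaves it (an incoming $d_2$ would come from $\sext^{c-2}(E)=0$, an outgoing one would land in the empty row $q=-2$), so it survives to $E_\infty^{c-1}$. Comparing with the left abutment in degree $c-1$ settles the first two bullets at once: when $c'=c-1$ the left side equals $(\Phi^0E)^D$, giving $\iota^*(\Phi^0(E^D))\otimes p^*L=(\Phi^0E)^D$; when $c'\geq c$ the left side is $0$, forcing $\Phi^0(E^D)=0$, i.e. $E^D$ is $\Phi$-WIT$_1$.

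For the impossibility of the third case I would push the comparison up one step, to total degree $c$, where $E_\infty^{c}=\sext^c(\Phi^0E)=0$ because $c<c'=c+1$. In that degree the right-hand graded pieces are $E_{2R}^{c+1,-1}=\iota^*(\Phi^0\sext^{c+1}(E))\otimes p^*L$, which again survives untouched and so must vanish, and $E_{2R}^{c,0}=\iota^*(\Phi^1(E^D))\otimes p^*L$. Here $\Phi^0(E^D)=0$ is already known, while $\Phi(E^D)\neq 0$ since $\Phi$ is an equivalence and $E^D\neq 0$; hence $\Phi^1(E^D)\neq 0$. As $E_\infty^c=0$, this nonzero term must be annihilated, and the only mechanism available is the single differential $d_2\colon E_{2R}^{c,0}\to E_{2R}^{c+2,-1}=\iota^*(\Phi^0\sext^{c+2}(E))\otimes p^*L$ being injective. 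I would exclude this by dimension. When $c+2>n$ the target is zero outright, so $d_2=0$ and $\Phi^1(E^D)$ survives, a contradiction; in the remaining range, $\sext^{c+2}(E)$ has codimension at least $c+2$, so by the support-dimension bound its transform has dimension at most $n-c-1$, whereas $\Phi^1(E^D)=\Phi(E^D)$ has dimension at least $\dim E^D-1=n-c-1$. Injectivity then forces $\dim\Phi^1(E^D)=\dim E^D-1$, i.e. the $\Phi$-WIT$_1$ sheaf $E^D$ drops dimension under $\Phi$.

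The main obstacle is precisely this exclusion of a dimension drop for $E^D$. A drop in the transform of a $\Phi$-WIT$_1$ sheaf is a fibrewise phenomenon, occurring only when the sheaf restricts to a family of degree-zero semistable sheaves on the generic fibre; but $E^D$ is fibrewise dual to $E$, so this would force $E$ itself to have degree zero on the generic fibre and hence be $\Phi$-WIT$_1$, contradicting the hypothesis that $E$ is $\Phi$-WIT$_0$. Making this fibrewise comparison precise, or equivalently extracting the sharper support-dimension estimate for $\Phi^1$ of a dual sheaf from the relative theory, is the delicate point; the rest is bookkeeping on the two $E_2$-pages.
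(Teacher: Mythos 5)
Your handling of the first two bullets is correct and follows the paper's own route: with $\Phi^1E=0$ the left page reduces to the single column $p=0$, and comparing total degree $c-1$ gives both the identity $\iota^*(\Phi^0(E^D))\otimes p^*L=(\Phi^0E)^D$ when $\codim \Phi^0E=c-1$ and the vanishing $\Phi^0(E^D)=0$ when $\codim\Phi^0E\geq c$. The problem is the third bullet, and it comes from a wrong direction for the differentials of the right-hand spectral sequence. That sequence is the Grothendieck spectral sequence of the composition $\Phi\circ\Delta_X$ with $\Phi$ as the \emph{outer} functor, so its differentials raise the $\Phi$-degree; in the paper's indexing, where the $\Phi$-degree is the row index $q+1$, they have the shape $d_r\colon E_{rR}^{p,q}\to E_{rR}^{p-r+1,\,q+r}$, not $E_{rR}^{p+r,\,q-r+1}$. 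Since the $\Phi$-degree is confined to $\{0,1\}$, every differential on every page vanishes and the right spectral sequence degenerates at $E_2$; this is precisely the remark following Proposition \ref{prop:comm}. In particular, the differential $d_2\colon E_{2R}^{c,0}\to E_{2R}^{c+2,-1}$ that you try, and fail, to exclude does not exist, and your entire final paragraph is a detour around a phantom.

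Once the degeneration is in place, the third bullet is immediate and needs no fibrewise input: if $\codim\Phi^0E=c+1$, the common abutment in total degree $c$ is $\sext^c_{\cO_X}(\Phi^0E,\cO_X)=0$, so \emph{both} surviving graded pieces $E_{2R}^{c,0}=\iota^*(\Phi^1(E^D))\otimes p^*L$ and $E_{2R}^{c+1,-1}$ must vanish; but $E^D\neq 0$ is $\Phi$-WIT$_1$ by your second bullet, and $\Phi$ is an equivalence, so $\Phi^1(E^D)\neq 0$, a contradiction. (The paper phrases this as the exact sequence
\begin{equation*}
0 \lra \Phi^1\sext^c_{\cO_X}(E,\cO_X) \lra \sext^c_{\cO_X}(\Phi^0E,\cO_X) \lra \Phi^0\sext^{c+1}_{\cO_X}(E,\cO_X) \lra 0,
\end{equation*}
which forces $\sext^c_{\cO_X}(\Phi^0E,\cO_X)\neq 0$ and hence $\codim\Phi^0E\leq c$.) Note also that your fallback strategy could not have been repaired as stated: a $\Phi$-WIT$_1$ sheaf can genuinely drop dimension under $\Phi$ (this is an allowed case in Proposition \ref{prop:wit1}, and $\cO_{X_s}$ on a smooth fiber is an example), so there is no general principle forbidding a dimension drop for $E^D$; your own admission that this step is ``delicate'' marks a real hole in the proof as written, not merely an omitted routine verification.
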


\begin{proof}
We assume $\codim E = c$ for some $0 \leq c \leq n$, then we know that $\sext^q(E, \cO_X)=0$ for $0 \leq q < c$, and $E^D = \sext^c(E, \cO_X)$. This implies that $E_{2R}^{pq}=0$ for $p < c$. In other words, the possibly non-trivial terms among $E_{2R}^{pq}$ are bounded by $c \leq p \leq n$ and $-1 \leq q \leq 0$.

Since $E$ is $\Phi$-WIT$_0$, we know that $\Phi^1E=0$, hence $E_{2L}^{pq}=0$ for $p=-1$. By assumption we also have $\codim \Phi^0E \geq c-1$, hence $E_{2L}^{pq}=0$ for $p=1$ and $0 \leq q <c-1$. In other words, the only possibly non-trivial terms among $E_{2L}^{pq}$ are given by $p=0$ and $c-1 \leq q \leq n$. And it is now clear that both spectral sequences degenerate at $E_2$-page.

Now we can apply Proposition \ref{prop:comm} to compare terms in the two spectral sequences. In particular, we look at the terms in $E_2$ pages with $p+q=c-1$. By the above observations we know that $E_{2L}^{pq}=0$ for any pair of $(p,q)$ with $p+q=c-1$ unless $(p,q)=(0,c-1)$, and $E_{2R}^{pq}=0$ for any pair of $(p,q)$ with $p+q=c-1$ unless $(p,q)=(c,-1)$. Since both spectral sequences converge to the same limit by Proposition \ref{prop:comm}, we conclude that $E_{2L}^{0,c-1}=E_{2R}^{c,-1}=0$, i.e.
\begin{equation}
\label{eqn:temp1}
\sext^{c-1}(\Phi^0E, \cO_X)=\iota^*(\Phi^0\sext^c(E, \cO_X)) \otimes p^*L.
\end{equation}
When $\sext^{c-1}(\Phi^0E, \cO_X)$ is non-trivial, or equivalently $\dim \Phi^0E = \dim E + 1$, \eqref{eqn:temp1} gives precisely what we are looking for. Otherwise, we have $\dim \Phi^0E \leq \dim E$. The left hand side of \eqref{eqn:temp1} is trivial hence the right hand side is also trivial, which is equivalent to $\Phi^0\sext^c(E, \cO_X)=0$. In other words, $E^D=\sext^c(E, \cO_X)$ is a $\Phi$-WIT$_1$ sheaf.

When $\dim \Phi^0E \leq \dim E$, we show that we must have $\dim \Phi^0E = \dim E$. This can be proved by looking at the terms in the $E_2$-pages with $p+q=c$. Indeed, we get an exact sequence
$$ 0 \lra \Phi^1\sext^c(E, \cO_X) \lra \sext^c(\Phi^0E, \cO_X) \lra \Phi^0\sext^{c+1}(E, \cO_X) \lra 0. $$
Since $\sext^c(E, \cO_X) \neq 0$ and is $\Phi$-WIT$_1$, we have $\Phi^1\sext^c(E, \cO_X) \neq 0$, hence $\sext^c(\Phi^0E, \cO_X) \neq 0$, which implies that $\codim \Phi^0E \leq c$, as desired.
\end{proof}

Now we turn to sheaves with $\Phi$-WIT$_1$ property. We can similarly prove the following result.

\begin{proposition}
\label{prop:wit1}
Assume $E \in \coh(X)$ is $\Phi$-WIT$_1$. Then
\begin{itemize}
\item The case $\dim \Phi^1E = \dim E + 1$ cannot happen;
\item If $\dim \Phi^1E = \dim E$, then $\iota^*(\Phi^0(E^D)) \otimes p^*L = (\Phi^1E)^D$;
\item If $\dim \Phi^1E = \dim E -1$, then $\Phi^0(E^D) = 0$; i.e., $E^D$ is $\Phi$-WIT$_1$.
\end{itemize}
\end{proposition}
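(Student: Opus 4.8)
The plan is to mirror the structure of the proof of Proposition~\ref{prop:wit0}, replacing the $\Phi$-WIT$_0$ hypothesis with $\Phi$-WIT$_1$ and tracking how this shifts which terms survive on the two $E_2$-pages. First I would set $\codim E = c$, so that $\sext^q(E,\cO_X)=0$ for $0\leq q<c$ and $E^D = \sext^c(E,\cO_X)$; as before this forces $E_{2R}^{pq}=0$ for $p<c$, leaving possibly non-trivial right-hand terms only in the range $c\leq p\leq n$, $-1\leq q\leq 0$. The essential change is on the left: since $E$ is now $\Phi$-WIT$_1$ rather than $\Phi$-WIT$_0$, we have $\Phi^0 E = 0$, so $E_{2L}^{pq}=0$ for $p=0$, and the surviving column is $p=-1$ carrying $\sext^q(\Phi^1 E,\cO_X)$. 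The dimension inequality $\dim\Phi E\leq\dim E+1$ together with $\Phi^0 E=0$ gives $\codim\Phi^1 E\geq c-1$, so $E_{2L}^{-1,q}=0$ for $0\leq q<c-1$, and the only possibly non-trivial left-hand terms sit at $p=-1$, $c-1\leq q\leq n$. With a single non-trivial column on each side, both spectral sequences degenerate at $E_2$.

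Next I would compare anti-diagonals $p+q=\text{const}$, using Proposition~\ref{prop:comm} to equate limit terms. Looking at $p+q=c-1$: on the left the only candidate is $E_{2L}^{-1,c}$ (which lies on $p+q=c-1$), and on the right the only candidate is $E_{2R}^{c,-1}$. Equating these yields
\begin{equation}
\label{eqn:temp-wit1}
\sext^{c}(\Phi^1E, \cO_X)=\iota^*(\Phi^0\sext^c(E, \cO_X)) \otimes p^*L.
\end{equation}
This single identity should carry all three bullet points. The first bullet, that $\dim\Phi^1 E = \dim E+1$ cannot happen, follows because that case would mean $\codim\Phi^1 E = c-1$, making $\sext^{c-1}(\Phi^1 E,\cO_X)=E_{2L}^{-1,c-1}$ non-trivial; but this term lies on the anti-diagonal $p+q=c-2$, where the right-hand side has no non-trivial term (the smallest right-hand $p$ is $c$, giving $p+q\geq c-1$), contradicting convergence to a common limit. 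For the second bullet, when $\dim\Phi^1 E=\dim E$ we have $\codim\Phi^1 E=c$, so the left side of \eqref{eqn:temp-wit1} is exactly $(\Phi^1 E)^D$ and non-trivial, which forces the right side to be non-trivial and gives the stated isomorphism $\iota^*(\Phi^0(E^D))\otimes p^*L = (\Phi^1E)^D$. For the third bullet, when $\dim\Phi^1 E=\dim E-1$ the term $\sext^c(\Phi^1 E,\cO_X)$ vanishes (as $\codim\Phi^1 E = c+1$), so the left of \eqref{eqn:temp-wit1} is trivial, hence so is the right, giving $\Phi^0\sext^c(E,\cO_X)=\Phi^0(E^D)=0$, i.e. $E^D$ is $\Phi$-WIT$_1$.

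I expect the main obstacle to be the exclusion argument in the first bullet: unlike the analogous step in Proposition~\ref{prop:wit0} (where a non-vanishing case was simply identified rather than excluded), here I must rule out a dimension jump entirely. The clean way is the anti-diagonal counting above: the candidate term $E_{2L}^{-1,c-1}$ sits on $p+q=c-2$, and since every non-trivial right-hand term has $p\geq c$ and $q\geq-1$ it satisfies $p+q\geq c-1>c-2$, so there is nothing on the right to match it, forcing $E_{2L}^{-1,c-1}=0$ and hence $\codim\Phi^1 E\geq c$. I would double-check the degenerate boundary behaviour when $c=0$ or $c=n$ to confirm no surviving term is overlooked, but the bookkeeping is routine once the two single-column pictures are in place.
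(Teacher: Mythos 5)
Your proposal is correct and takes essentially the same approach as the paper's own proof: the same two $E_2$-pages, the same degeneration observations, and the same comparison of the anti-diagonals $p+q \le c-2$ and $p+q = c-1$ leading to the identity \eqref{eqn:temp2}, from which the three bullets follow exactly as you describe. The only cosmetic deviations are that you first invoke $\dim \Phi(\cE) \le \dim \cE + 1$ to narrow the left-hand column before excluding $\codim \Phi^1 E = c-1$ (the paper derives all the needed vanishing directly from the anti-diagonal comparison), and your phrase ``a single non-trivial column on each side'' is imprecise for the right-hand spectral sequence (it has two rows, $-1 \le q \le 0$), though its $E_2$-degeneration holds anyway for the reason recorded in the paper's remark following Proposition \ref{prop:comm}.
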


\begin{proof}
The proof is very similar to that of the previous result. We first look at the second spectral sequence. Assume that $\codim E = c$, then we know that the non-trivial region in $E_{2R}^{pq}$ is bounded by $c \leq p \leq n$ and $-1 \leq q \leq 0$. In particular, $E_{2R}^{pq}=0$ for any pair of $(p,q)$ with $p+q \leq c-2$ and the only possible non-trivial term with $p+q=c-1$ is $E_{2R}^{c,-1}=\iota^*(\Phi^0\sext^c(E, \cO_X)) \otimes p^*L$.

Then we look at the first spectral sequence. Since we assume $E$ is $\Phi$-WIT$_1$, we know that $E_{2L}^{pq}=0$ unless $p=-1$. In particular it degenerates at $E_2$-page. Moreover, by Proposition \ref{prop:comm} we can compare the two spectral sequences and conclude that $E_{2L}^{pq}=0$ for any pair $(p,q)$ with $p+q \leq c-2$, hence $\sext^q(\Phi^1E, \cO_X)=0$ for $q \leq c-1$, which implies that $\codim \Phi^1E \geq c$.

By comparing terms with $p+q=c-1$ we can also obtain that $E_{2R}^{c,-1}=E_{2L}^{-1,c}$; that is
\begin{equation}
\label{eqn:temp2}
\iota^*(\Phi^0\sext^c(E, \cO_X)) \otimes p^*L = \sext^c(\Phi^1E, \cO_X).
\end{equation}
If $\dim \Phi^1E = \dim E$, it is precisely the equation that we want. If $\dim \Phi^1E = \dim E -1$, then the right hand side of \eqref{eqn:temp2} is trivial, therefore the left hand side is also trivial, which implies that $\Phi^0\sext^c(E, \cO_X)=0$. In other words, $E^D$ is $\Phi$-WIT$_1$.
\end{proof}

Finally, we prove a very simple result along the same line, but without assuming any WIT property of $E$. We can think of it as a result which is slightly stronger than Propositions \ref{prop:wit0} and \ref{prop:wit1} in the special case of $\dim E = \dim X = n$.

\begin{proposition}
Assume $E \in \coh(X)$ satisfies $\dim E = n$ and $\dim \Phi^1E < n$, then $E^D = \shom(E, \cO_X)$ is $\Phi$-WIT$_1$.
\end{proposition}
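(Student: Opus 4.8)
The plan is to specialise Proposition \ref{prop:comm} to the boundary case $\codim E = 0$, where $E^D = \shom(E,\cO_X)$ and no WIT hypothesis on $E$ is available. Since $\dim E = n$ we have $c := \codim E = 0$, so that $E^D = \sext^0(E,\cO_X) = \shom(E,\cO_X)$, and the object we must kill, namely $\Phi^0(E^D)$, appears (up to the invertible operations $\iota^*(-)\otimes p^*L$) as the corner term $E_{2R}^{0,-1} = \iota^*(\Phi^0\shom(E,\cO_X))\otimes p^*L$ of the right-hand spectral sequence. Thus it suffices to prove $E_{2R}^{0,-1}=0$, and I would do this by comparing the two spectral sequences of Proposition \ref{prop:comm} along the antidiagonal $p+q=-1$.

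First I would analyse the left-hand side on this antidiagonal. Within the nontrivial range $-1\leq p\leq 0$, $0\leq q\leq n$, the only lattice point with $p+q=-1$ is $(p,q)=(-1,0)$, so the single possibly-nonzero term is $E_{2L}^{-1,0}=\sext^0(\Phi^1 E,\cO_X)=\shom(\Phi^1 E,\cO_X)$. The hypothesis $\dim\Phi^1 E<n$ says precisely that $\codim\Phi^1 E\geq 1$, whence $\shom(\Phi^1 E,\cO_X)=0$ by \cite[Proposition 1.1.6]{HL}. Therefore every $E_2$-term on the antidiagonal $p+q=-1$ on the left vanishes; since each $E_\infty$-term is a subquotient of the corresponding $E_2$-term, the degree $-1$ part of the common limit $E_\infty$ is zero. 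Observe that this step requires no WIT assumption on $E$ and no degeneration of the left spectral sequence: it is enough that there is a unique term on the antidiagonal and that it vanishes.

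Next I would transfer this vanishing to the right-hand side. In the range $0\leq p\leq n$, $-1\leq q\leq 0$ the only lattice point with $p+q=-1$ is $(p,q)=(0,-1)$, so $E_{2R}^{0,-1}$ is the unique term on this antidiagonal; in particular the contributions coming from $\Phi^0 E$ and $\Phi^1 E$ do not interfere here, which is exactly what lets us dispense with the WIT hypothesis. Because the right-hand spectral sequence degenerates at the $E_2$-page (it occupies only the two rows $q=-1,0$), we have $E_{2R}^{0,-1}=E_{\infty}^{0,-1}$, a graded piece of the degree $-1$ part of the limit. By the previous paragraph that limit vanishes, so $E_{2R}^{0,-1}=0$. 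Since $\iota^*$ is pullback along an isomorphism and $\otimes\, p^*L$ is invertible, this forces $\Phi^0\shom(E,\cO_X)=0$, i.e. $E^D$ is $\Phi$-WIT$_1$, as claimed.

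The one point that requires care is the survival of the corner term $E_{2R}^{0,-1}$ to the $E_\infty$-page: because we make no WIT assumption I cannot simply reuse the single-column degeneration exploited in Propositions \ref{prop:wit0} and \ref{prop:wit1}, and must instead invoke the two-row degeneration of the right spectral sequence (equivalently, note directly that $(0,-1)$ lies in both the leftmost column and the bottom row, so no differential on any page can enter or leave it, for either choice of sign convention). Once this is settled the argument is a single-antidiagonal comparison and is genuinely shorter than the proofs of Propositions \ref{prop:wit0} and \ref{prop:wit1}; indeed it recovers their $c=0$ conclusion that $E^D$ is $\Phi$-WIT$_1$ while discarding the hypothesis that $E$ itself be WIT.
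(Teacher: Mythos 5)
Your proof is correct and takes essentially the same route as the paper: both compare the two spectral sequences of Proposition \ref{prop:comm} along the antidiagonal $p+q=-1$, where the unique left-hand term $\shom(\Phi^1E,\cO_X)$ vanishes because $\codim \Phi^1E \geq 1$, forcing the unique right-hand term $\iota^*(\Phi^0\shom(E,\cO_X))\otimes p^*L$ to vanish as well. The only difference is presentational: the paper asserts the identity $\shom(\Phi^1E,\cO_X)=\iota^*(\Phi^0\shom(E,\cO_X))\otimes p^*L$ outright and then kills both sides, whereas you first kill the left term and carefully justify why the corner term $E_{2R}^{0,-1}$ survives to $E_\infty$ --- a point the paper leaves implicit (having noted the two-row degeneration in its earlier remark).
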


\begin{proof}
This is a simple consequence of Proposition \ref{prop:comm}. By looking at the terms in the $E_2$-pages with $p+q=-1$, we get $\shom(\Phi^1E, \cO_X)=\iota^*(\Phi^0\shom(E, \cO_X)) \otimes p^*L$. Since we also assume $\dim \Phi^1E < n$, the left hand side of the equation is trivial, hence so is the right hand side, which implies $E^D=\shom(E, \cO_X)$ is $\Phi$-WIT$_1$.
\end{proof}

The above results will be used to study the behavior of sheaves under Fourier-Mukai transforms in the next section.

\section{Transforms of line bundles}\label{sec4}

We now use the duality spectral sequences from Section \ref{sec:specseq} to study line bundles under Fourier-Mukai transforms on elliptic fibrations.

\subsection{The basic setting}\label{subsec:setting}

In this section, we compute $\ch_0, \ch_1$ of the transforms of certain line bundles, so that we can study the slope stability of these transforms later on.

We will now make the following assumptions on our fibration $p : X \to S$:
\begin{itemize}
\item[(i)] The total space $X$ in our \wf  $p : X \to S$ is a $K$-trivial  threefold.
\item[(ii)] The canonical class $K_S$ of the base $S$ of the fibration $p$ is \textit{numerically trivial}.  This allows $S$ to be a K3 surface or an Enriques surface.
\item[(iii)] The cohomology ring of $X$ is of the form 
\[
  H^{2i}(X,\mathbb{Q}) = \Theta p^\ast H^{2i-2}(S,\mathbb{Q}) \oplus p^\ast H^{2i}(S,\mathbb{Q}),
\]
an assumption often made in the study of elliptic threefolds as in \cite[Section 6.6.3]{FMNT}.
\end{itemize}
Note that applying adjunction to the closed immersion $\Theta \hookrightarrow X$ gives
\begin{equation}\label{eqn:Sigma-sq}
\Theta^2 = \Theta\cdot p^\ast K_S.
\end{equation}

For any integer $m$, we will write
\[
  L_m = \cO_X (m\Theta).
\]
That $K_S$ is numerically trivial in assumption (ii) implies $\Theta^2=0$ and $\Theta^3=0$ by \eqref{eqn:Sigma-sq}.  Hence by the formulas in \cite[Section 6.2.6]{FMNT}, we obtain $\ch (L_m) = 1 + m\Theta$ and 
\begin{align*}
  \ch_0 (\Phi L_m) &= m, \\
  \ch_1 (\Phi L_m) &=-\Theta-\tfrac{m}{2}c
\end{align*}
where $c = -p^\ast K_S$.

We can now compute the slope of the transform of $L_m$.  Let us  fix an ample class  $\omega$  on $X$.  By assumption (iii) above, we know $\omega$ must be of the form
\begin{equation}\label{def:omega}
  \omega = t\Theta + s p^\ast H_S
\end{equation}
for some ample class $H_S$ on $S$, and some real numbers $t, s >0$. In this notation  
\[
 \omega^2 = t^2 \Theta^2 + 2ts \Theta  p^\ast H_S + s^2 p^\ast H_S^2.
\]

The slope of $\Phi L_m$ with respect to the polarisation $\omega$ is then
\begin{align}
  \mu_\omega (\Phi L_m) &= \frac{\ch_1(\Phi L_m)\omega^2}{\ch_0 (\Phi L_m)} \notag\\
  &= \frac{(-\Theta-\tfrac{m}{2}c)(t^2\Theta^2+2ts\Theta p^\ast H_S + s^2 p^\ast H_S^2)}{m} \\
  &= \frac{-s^2 \Theta p^\ast H_S^2}{m} \\
  &= -\frac{s^2}{m}H_S^2. \label{eqn:L-aslope}
\end{align}

\subsection{Transforms of reflexive sheaves}

\begin{lemma}\label{lemma-reflexivetrans}

For any $\Phi$-WIT$_0$ reflexive sheaf $E$ on $X$, the transform $\wh{E}$ is locally free.
\end{lemma}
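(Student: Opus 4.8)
The plan is to characterise local freeness of $\wh{E} = \Phi^0 E$ through the vanishing of its dual $\sext$-sheaves, and then to read off that vanishing from the right-hand spectral sequence of Proposition \ref{prop:comm}. On the smooth threefold $X$ a coherent sheaf $F$ is locally free if and only if $\sext^q_{\cO_X}(F, \cO_X) = 0$ for all $q \geq 1$ (locally, the projective dimension of $F$ over the regular local ring $\cO_{X,x}$ equals the top degree in which $\sext^\bullet$ survives). So the whole proof reduces to showing $\sext^q(\wh E, \cO_X) = 0$ for $q = 1, 2, 3$. Since $E$ is reflexive and nonzero it is torsion-free, hence of full support with $\codim E = 0$, so in the notation of Section \ref{sec:specseq} we have $c = 0$ and $E^D = \shom(E, \cO_X)$. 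Because $E$ is $\Phi$-WIT$_0$ we have $\Phi^1 E = 0$, so the left-hand spectral sequence has a single nonzero column ($p = 0$) with $E_{2L}^{0,q} = \sext^q(\wh E, \cO_X)$; in particular it degenerates at $E_2$ and its abutment in total degree $q$ is exactly $\sext^q(\wh E, \cO_X)$.

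Next I would exploit reflexivity on the right-hand side. The Serre condition $S_2$ for a reflexive sheaf on a smooth threefold forces $\codim \supp \sext^q(E, \cO_X) \geq q + 2$ for $q \geq 1$; hence $\sext^2(E, \cO_X) = \sext^3(E, \cO_X) = 0$ and $\sext^1(E, \cO_X)$ is supported in dimension $\leq 0$. Feeding this into $E_{2R}^{pq} = \iota^*(\Phi^{q+1}\sext^p(E, \cO_X)) \otimes p^*L$, only the columns $p = 0$ and $p = 1$ survive, and the right-hand sequence degenerates at $E_2$ as already observed in the remark following Proposition \ref{prop:comm}. Comparing the two abutments along the anti-diagonals $p + q = 2$ and $p + q = 3$, every contributing term on the right involves $\sext^2(E, \cO_X)$ or $\sext^3(E, \cO_X)$ and therefore vanishes; this yields $\sext^2(\wh E, \cO_X) = \sext^3(\wh E, \cO_X) = 0$. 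Along $p + q = 1$ the only surviving right-hand term is $E_{2R}^{1,0} = \iota^*(\Phi^1 \sext^1(E, \cO_X)) \otimes p^*L$, so that
\[
  \sext^1(\wh E, \cO_X) \cong \iota^*\big(\Phi^1 \sext^1(E, \cO_X)\big) \otimes p^*L .
\]

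It therefore remains to prove that $\sext^1(E, \cO_X)$ is $\Phi$-WIT$_0$, i.e.\ that $\Phi^1$ annihilates it. This is where the zero-dimensionality bought from reflexivity is essential: $\sext^1(E, \cO_X)$ is supported on finitely many points, hence is a successive extension of skyscraper sheaves, and for a \wf the Fourier-Mukai transform sends the skyscraper at a closed point to a rank-one degree-zero sheaf on the fiber through that point, concentrated in cohomological degree $0$. Thus each skyscraper is $\Phi$-WIT$_0$, and the long exact sequences of the $\Phi^i$ propagate this through extensions, giving $\Phi^1 \sext^1(E, \cO_X) = 0$ and hence $\sext^1(\wh E, \cO_X) = 0$. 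Combining the three vanishings gives $\sext^{q}(\wh E, \cO_X) = 0$ for all $q \geq 1$, so $\wh E$ is locally free.

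I expect the one genuinely delicate point to be this final step: justifying that every zero-dimensional sheaf is $\Phi$-WIT$_0$, i.e.\ that the skyscraper sheaves transform to honest sheaves in degree $0$ rather than acquiring a $\Phi^1$ term. Everything else is bookkeeping with the two spectral sequences, where the convenient feature is that both degenerate at $E_2$ and each relevant anti-diagonal ($p+q \geq 1$) carries at most one nonzero term, so the abutment comparison of Proposition \ref{prop:comm} is a direct isomorphism rather than an extension problem.
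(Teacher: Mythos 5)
Your proof is correct, but it takes a genuinely different route from the paper's. The paper does not use the duality spectral sequences for this lemma at all: it first quotes \cite[Corollary 4.5]{Lo11} to conclude that $\wh{E}$ is torsion-free and reflexive, hence of homological dimension at most $1$ on the smooth threefold $X$, and then excludes homological dimension exactly $1$ by computing, for each closed point $x$, $\Hom((\wh{E})^\vee,\cO_x[-1])\cong\Ext^2(\cO_x,\wh{E})\cong\Hom(\wh{\cO_x},E[1])$ through the equivalence $\Phi$, the last group vanishing by \cite[Lemma 4.20]{CL}. You instead run Proposition \ref{prop:comm} directly: the $\Phi$-WIT$_0$ hypothesis collapses the left-hand page to a single column; reflexivity of $E$ on a smooth threefold gives $\sext^2(E,\cO_X)=\sext^3(E,\cO_X)=0$ and $\dim\supp\sext^1(E,\cO_X)\leq 0$; the anti-diagonal comparison then yields $\sext^q(\wh{E},\cO_X)=0$ for $q=2,3$ and the identification $\sext^1(\wh{E},\cO_X)\cong\iota^*(\Phi^1\sext^1(E,\cO_X))\otimes p^*L$, so everything reduces to the fact that zero-dimensional sheaves are $\Phi$-WIT$_0$. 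That fact is true, including at singular points of the fibers: since $\cP$ is flat over $X$ via $\pi_1$ (\cite[Proposition 6.15]{FMNT}, as used in Section \ref{sec2}), $\pi_1^*\cO_x\otimes^{L}\cP$ is the sheaf $\cP|_{\{x\}\times X_{p(x)}}$ in degree $0$, and $\pi_2$ is a closed immersion on that locus, so $\Phi(\cO_x)$ is a rank-one torsion-free degree-zero sheaf on the fiber concentrated in degree $0$; extension-closure of WIT$_0$ via the long exact sequence of the $\Phi^i$ finishes. (You should supply this one-line flatness argument or a citation, since the paper's own Lemma \ref{lem:LB0} only invokes $\wh{\cO_z}$ for a \emph{smooth} point $z$ of its fiber.) As for what each approach buys: yours is self-contained within the machinery of Section \ref{sec:specseq}, avoids the external inputs \cite{Lo11} and \cite{CL}, and gives the sharper intermediate statement that the failure of local freeness of $\wh{E}$ is exactly $\Phi^1$ of the singularity sheaf $\sext^1(E,\cO_X)$; the paper's argument is shorter given its citations, with the reflexivity of $\wh{E}$ from \cite{Lo11} doing the work that your $\sext$-vanishing bookkeeping does by hand.
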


\begin{proof}
Our definition of an elliptic fibration $p$ implies that it is Gorenstein by \cite[Definition 6.8]{FMNT}, and hence Cohen-Macaulay. 
Moreover the dualising sheaf of each fiber of $p$ is trivial.

For any $E$ satisfying the hypotheses stated, we already know $\wh{E}$ is torsion-free and reflexive by \cite[Corollary 4.5]{Lo11}.  In particular, $\wh{E}$ has homological dimension at most 1.  To show that $\wh{E}$ is locally free, we need to show that its homological dimension is zero, i.e.\ $\Hom ( (\wh{E})^\vee, \cO_x[-1])=0$ for all closed points $x \in X$.

For any closed point $x \in X$, we have
\begin{align*}
  \Hom ( (\wh{E})^\vee, \cO_x [-1]) &\cong \Hom (\cO_x^\vee [1], \wh{E}) \\
  &\cong \Hom (\cO_x [-2],\wh{E}) \\
  &\cong \Ext^2 (\cO_x, \wh{E}). \\
\end{align*}
Using the Fourier-Mukai functor $\Phi$, we also have
\[
  \Ext^2 (\cO_x,\wh{E}) \cong \Ext^2 (\wh{\cO_x}, E[-1]) \cong \Hom (\wh{\cO_x},E[1]),
\]
which vanishes for torsion-free reflexive $E$ by \cite[Lemma 4.20]{CL}.
\end{proof}

\begin{lemma}\label{lem:LB0}
For any  integer $m$, the line bundle $\cO_X (m\Theta)$ is $\Phi$-WIT$_0$ (resp.\ $\Phi$-WIT$_1$) if $m > 0$ (resp.\ $m \leq 0$).  The transform $\wh{\cO_X (m\Theta)}$ is a locally free sheaf for any $m\neq 0$, while $\wh{\cO_X} \cong \cO_\Theta \otimes p^\ast \omega$.
\end{lemma}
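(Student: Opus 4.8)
The plan is to treat the three assertions in turn, resolving the sign of $m$ separately and reserving a direct kernel computation for $\cO_X$ itself. I would first settle the WIT dichotomy by restricting to fibers. Writing $L_m=\cO_X(m\Theta)$, the restriction of the kernel $\cP$ to a fiber $\pi_2^{-1}(x)\cong X_s$ (with $x\in X_s$) is a rank-one torsion-free sheaf of degree $0$ on the genus-one curve $X_s$, since $\cI_\Delta$ contributes degree $-1$, the factor $\pi_1^\ast\cO_X(\Theta)$ contributes degree $+1$, and the remaining factors are pulled back from $x$. Hence the restriction of $\pi_1^\ast L_m\otimes\cP$ to $\pi_2^{-1}(x)$ has degree $m$. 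Because $\pi_2$ has one-dimensional fibers, $\Phi^0 L_m=R^0\pi_{2\ast}$ and $\Phi^1 L_m=R^1\pi_{2\ast}$ of this sheaf are controlled by $H^0$ and $H^1$ along the fibers: for $m>0$, Riemann--Roch and Serre duality on the Gorenstein fibers (whose dualizing sheaf is trivial) give $H^1=0$, so $\Phi^1 L_m=0$ and $L_m$ is $\Phi$-WIT$_0$; for $m<0$ the same computation gives $H^0=0$, so $\Phi^0 L_m=0$ and $L_m$ is $\Phi$-WIT$_1$. The degree-zero case $m=0$ is not decided by the generic fiber and will fall out of the explicit computation below.

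Next I would prove local freeness. For $m>0$ the sheaf $L_m$ is a line bundle, hence reflexive, and $\Phi$-WIT$_0$, so Lemma~\ref{lemma-reflexivetrans} applies directly and $\wh{L_m}$ is locally free. For $m<0$ I would bootstrap from the positive case using the commutativity Theorem~\ref{thm:commutativity}. Applying \eqref{eqn:2comp} to $\cE=L_{-m}$, which is $\Phi$-WIT$_0$ with $-m>0$, and using $\Delta_X(L_{-m})=L_m$, yields
\[
  \Delta_X\big(\wh{L_{-m}}\big)=\iota^\ast\big(\Phi(L_m)\big)\otimes p^\ast L[1].
\]
Since $\wh{L_{-m}}$ is locally free by the previous step, the left-hand side $(\wh{L_{-m}})^\vee$ is a locally free sheaf concentrated in degree $0$; as $\iota$ is an isomorphism and $p^\ast L$ a line bundle, the right-hand side forces $\Phi(L_m)$ to be concentrated in cohomological degree $1$, with $\Phi^1 L_m=\wh{L_m}$ locally free. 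This simultaneously reconfirms that $L_m$ is $\Phi$-WIT$_1$ for $m<0$.

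Finally I would compute $\wh{\cO_X}$ directly from the kernel. Since $\pi^\ast\omega^{-1}=\pi_2^\ast p^\ast\omega^{-1}$, the projection formula gives
\[
  \Phi(\cO_X)=R\pi_{2\ast}(\cP)=R\pi_{2\ast}\big(\cI_\Delta\otimes\pi_1^\ast\cO_X(\Theta)\big)\otimes\cO_X(\Theta)\otimes p^\ast\omega^{-1}.
\]
Tensoring the ideal-sheaf sequence $0\to\cI_\Delta\to\cO_{\XX}\to\cO_\Delta\to0$ with the line bundle $\pi_1^\ast\cO_X(\Theta)$ and applying $R\pi_{2\ast}$, one identifies $R^0\pi_{2\ast}(\pi_1^\ast\cO_X(\Theta))\cong\cO_X$ (the fiberwise degree-one line bundle has $h^0=1$ and $h^1=0$), while $R\pi_{2\ast}(\cO_\Delta\otimes\pi_1^\ast\cO_X(\Theta))=\cO_X(\Theta)$ because $\pi_2|_\Delta$ is an isomorphism. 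The connecting map is the restriction-to-the-diagonal morphism $\cO_X\to\cO_X(\Theta)$, which is injective with cokernel supported on $\Theta$; this shows $R^0\pi_{2\ast}(\cI_\Delta\otimes\pi_1^\ast\cO_X(\Theta))=0$, so $\Phi^0\cO_X=0$ (settling the WIT$_1$ claim for $m=0$), and $R^1\pi_{2\ast}(\cI_\Delta\otimes\pi_1^\ast\cO_X(\Theta))\cong\cO_\Theta(\Theta)$. Collecting the twists and using the normal bundle identity $\cO_X(\Theta)|_\Theta\cong p^\ast\omega|_\Theta$ from the computation in the Remark following Theorem~\ref{thm:commutativity}, I would obtain $\wh{\cO_X}=\Phi^1\cO_X\cong\cO_\Theta\otimes p^\ast\omega$, where $\omega=R^1p_\ast\cO_X$.

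The main obstacle is this last step: getting the precise twist right requires verifying that the evaluation map $\cO_X\to\cO_X(\Theta)$ vanishes exactly to first order along $\Theta$, so that its cokernel is $\cO_\Theta(\Theta)$ rather than a thickening, and then tracking all line-bundle factors through their restriction to $\Theta$, all while ensuring that the base-change and direct-image identifications remain valid over the singular (nodal or cuspidal) fibers of $p$. The fiberwise degree argument of the first step likewise rests on Riemann--Roch and Serre duality holding on these Gorenstein genus-one fibers, which is where I would exercise the most care.
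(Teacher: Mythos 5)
Your proof is correct, but it takes a genuinely different route from the paper's in two places, both to good effect. For the WIT dichotomy you restrict the kernel to fibers of $\pi_2$ and argue via Serre duality on the integral Gorenstein fibers; this is the same fiberwise strategy as the paper, which instead quotes \cite[Proposition 6.38]{FMNT} together with \cite[Lemma 3.6, Lemma 3.18(ii)]{Lo11} --- note that passing from fiberwise $H^0$-vanishing to $\Phi^0 L_m=0$ quietly requires flatness of $\pi_1^*L_m\otimes\cP$ over the second factor plus cohomology-and-base-change (Grauert over the reduced target $X$), which is exactly what the cited lemmas encapsulate, so that step deserves to be made explicit. The real departure is the case $m<0$: the paper proves local freeness by induction on $n=-m$ using the sequences $0\to\cO_X(-(n+1)\Theta)\to\cO_X(-n\Theta)\to\cO_\Theta\to 0$, reducing to $m=-1$ and invoking a constant-fiber-dimension criterion \cite[Exercise II.5.8(c)]{Har77}; you instead apply the duality formula \eqref{eqn:2comp} to $L_{-m}$, whose transform is already known to be locally free, obtaining
\[
\Phi(L_m)[1]\;\cong\;\iota^*\bigl((\wh{L_{-m}})^\vee\bigr)\otimes p^*L^{-1},
\]
which gives the $\Phi$-WIT$_1$ property and local freeness of $\wh{L_m}$ in one stroke. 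This is cleaner, avoids the fiber-by-fiber bookkeeping, and mirrors how the paper itself later passes between $m<0$ and $m>0$ in the proof of Theorem~\ref{theorem:main1} via Proposition~\ref{prop:wit1}. Finally, for $m=0$ the paper simply cites \cite[Example 6.24]{FMNT}, whereas you recompute $\Phi(\cO_X)$ from the kernel; your computation is sound --- the connecting map is the canonical section of $\cO_X(\Theta)$, whose zero divisor is effective, supported on the irreducible $\Theta$, and linearly equivalent to $\Theta$, hence equal to $\Theta$ with multiplicity one (a multiplicity $k\geq 2$ would force $(k-1)z\sim 0$ on a fiber), and the normal-bundle identity $\cO_X(\Theta)|_\Theta\cong p^*\omega|_\Theta$ from the Remark after Theorem~\ref{thm:commutativity} collects the twists --- but it reproves a fact the paper is content to quote.
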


As above, we write $L_m = \cO_X (m\Theta)$.  

\begin{proof}
We deal with the cases of $m>0, m<0$ and $m=0$ separately.

\textbf{Case 1:} $m>0$. The restriction ${L_m}|_s$ to the fiber of $p$ over $s \in S$ is a line bundle of strictly positive degree for any $s \in S$, and so  ${L_m}|_s$ is $\Phi_s$-WIT$_0$ for any $s \in S$ \cite[Proposition 6.38]{FMNT}.  Thus $L_m$ itself is $\Phi$-WIT$_0$ \cite[Lemma 3.6]{Lo11}.  By Lemma \ref{lemma-reflexivetrans}, the transform $\wh{L_m}$ is locally free.

\textbf{Case 2:} $m<0$.  In this case, the restriction $\Lm|_s$ to the fiber of $p$ over $s \in S$ is a line bundle of  negative degree for any $s \in S$, and so $\Lm|_s$ is $\Phi_s$-WIT$_1$ for any $s \in S$ \cite[Proposition 6.38]{FMNT}.  Since $\Lm$ is torsion-free, from \cite[Lemma 3.18(ii)]{Lo11} we know that $\Lm$ is $\Phi$-WIT$_1$. 

For any positive integer $n$,  the composition of surjective sheaf morphisms $\cO_X \twoheadrightarrow \cO_{(n+1)\Theta} \twoheadrightarrow \cO_{n\Theta}$ gives the short exact sequence of sheaves
\[
 0 \to \cO_X (-(n+1)\Theta) \to \cO_X (-n\Theta) \to \cO_\Theta \to 0.
\]
Applying $\Phi$ gives the the short exact sequence of sheaves
\[
  0 \to \wh{\cO_\Theta} \to \wh{\cO_X (- (n+1)\Theta)} \to \wh{\cO_X (-n\Theta)} \to 0.
\]
Now, $\wh{\cO_\Theta}$ is flat over $S$ by \cite[Corollary 6.2]{FMNT}, and so for any point $s \in S$, we have $\wh{\cO_\Theta} |_s \cong \wh{ \cO_\Theta|_s} = \wh{\cO_z}$ by \cite[(6.2)]{FMNT}.  Since $z$ is a smooth point on $X_s$, the transform $\wh{\cO_z}$ is a line bundle.  Thus $\wh{\cO_\Theta}$  is locally free.  As a result,  to show that $\Lm = \cO_X (m\Theta)$ is locally free for all $m<0$, it suffices to check the case where $m=-1$.  That is, it suffices to show that $\wh{\cO(-\Theta)}$ is locally free.  In this case, we have the short exact sequence
\[
 0 \to \wh{\cO_\Theta} \to \wh{\cO_X (-\Theta)} \to \wh{\cO_X} \to 0.
\]
Since both $\wh{\cO_\Theta}, \wh{\cO_X}$ are flat over $S$, the transform $\wh{\cO_X (-\Theta)}$ must also be flat over $S$.  Thus for any $s \in S$ we have the short exact sequence of sheaves on $X_s$
\[
0 \to \wh{\cO_\Theta}|_s \to \wh{\cO_X (-\Theta)}|_s \to \wh{\cO_X}|_s \to 0.
\]
By Case 3 below, we know $\wh{\cO_X}|_s = \cO_z$ is the structure sheaf of a smooth point $z$ on $X_s$ ($z$ being  the intersection of $\Theta$ and $X_s$). On the other hand, we know $\wh{\cO_\Theta}|_s$ is a line bundle on $X_s$ from above.  By \cite[Corollary 6.3]{FMNT}, the restriction of the transform $\wh{\cO_X(-\Theta)}|_s$ is isomorphic to the transform of the restriction $\cO_X(-\Theta)|_s$, which is semistable hence in particular torsion free by \cite[Proposition 6.38]{FMNT}. Therefore $\wh{\cO_X(-\Theta)}|_s$ is a line bundle for any $s \in S$. It follows that $\wh{\cO_X(-\Theta)}$ has constant fiber dimension $1$ hence is a line bundle by \cite[Exercise II.5.8(c)]{Har77}.

\textbf{Case 3:} $m=0$.  That $\wh{\cO_X} \cong \cO_\Theta \otimes p^\ast \omega$ follows from \cite[Example 6.24]{FMNT}.
\end{proof}

\subsection{\textit{K}-trivial elliptic threefolds over numerically \textit{K}-trivial surfaces}

\begin{proposition}\label{prop:LB1}
Suppose $K_S$ is numerically trivial. For any ample class $\omega$ on $X$ and  any  negative integer $m$, the transform $\wh{\cO_X(m\Theta)}$ is $\mu_\omega$-stable.
\end{proposition}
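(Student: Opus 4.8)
The plan is to argue by contradiction, ruling out $\mu_\omega$-destabilizing subsheaves. First I would record the numerics: by Lemma~\ref{lem:LB0} the transform $\wh{\cO_X(m\Theta)}=\wh{L_m}$ is locally free, and writing $\wh{L_m}=\Phi(L_m)[1]$ (since $L_m$ is $\Phi$-WIT$_1$) the computation leading to \eqref{eqn:L-aslope} gives $\rank\wh{L_m}=-m$, $\ch_1(\wh{L_m})\equiv\Theta$, and $\mu_\omega(\wh{L_m})=\tfrac{s^2}{-m}H_S^2$, which is independent of the parameter $t$ in $\omega=t\Theta+sp^\ast H_S$. When $m=-1$ the rank is $1$ and there is nothing to prove, so I assume $m\leq -2$. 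Suppose $F\subset\wh{L_m}$ is a saturated subsheaf with $0<\rank F=:r<-m$ and torsion-free quotient $Q$; by assumption (iii) I may write $\ch_1(F)=a\Theta+p^\ast\beta$ with $a\in\bZ$ and $\beta\in H^2(S,\bQ)$. A direct expansion using $\Theta^2\equiv 0$ shows
\[
\mu_\omega(F)-\mu_\omega(\wh{L_m})=s^2H_S^2\Big(\tfrac{a}{r}-\tfrac{1}{-m}\Big)+\frac{2ts\,(\beta\cdot H_S)}{r},
\]
so everything reduces to controlling the two quantities $a$ and $\beta\cdot H_S$.

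The fibre direction is the easy half. Restricting to a general fibre $X_s$ and using the base-change isomorphism $\wh{L_m}|_{X_s}\cong\wh{L_m|_{X_s}}$ (as in the proof of Lemma~\ref{lem:LB0}, via \cite[Corollary 6.3]{FMNT}), I identify $\wh{L_m}|_{X_s}$ with the relative Fourier--Mukai transform of the degree-$m$ line bundle $L_m|_{X_s}$ on the elliptic curve $X_s$; this is a stable bundle of rank $-m$ and degree $1$, stability being automatic from $\gcd(-m,1)=1$. Hence $F|_{X_s}$ is a proper subsheaf of a stable bundle of slope $\tfrac{1}{-m}$, forcing $\tfrac{a}{r}<\tfrac{1}{-m}$ and in particular $a\leq 0$. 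Consequently the first summand above is strictly negative, and $F$ can destabilise $\wh{L_m}$ for some ample $\omega$ if and only if $\beta\cdot H_S>0$ (in which case it does so precisely for $t\gg 0$). Thus the proposition is equivalent to the assertion that no saturated subsheaf $F$ of rank in $(0,-m)$ has $\beta\cdot H_S>0$.

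Establishing $\beta\cdot H_S\leq 0$ is the crux, and the main obstacle. Here I would exploit that $L_m$ is a line bundle. Since $\wh{L_m}$ and its quotient $Q$ are WIT$_0$ for the quasi-inverse $\hat\Phi$ of $\Phi$ (Theorem~\ref{thm:FM-formula}), applying $\hat\Phi$ to $0\to F\to\wh{L_m}\to Q\to 0$ and using $\hat\Phi^0\wh{L_m}\cong L_m$ (up to the involution $\iota$ and a twist by a line bundle pulled back from $S$, neither of which affects ranks or the base class) together with $\hat\Phi^1\wh{L_m}=0$ yields a four-term exact sequence
\[
0\to\hat\Phi^0F\to L_m\xrightarrow{\ \phi\ }\hat\Phi^0Q\to\hat\Phi^1F\to 0.
\]
Because $L_m$ has rank $1$, the subsheaf $\hat\Phi^0F$ is either $0$ or a full-rank twisted ideal subsheaf of $L_m$, which pins down the ranks and fibre degrees of $\hat\Phi^0F$ and $\hat\Phi^1F$ through the induced $\mathrm{SL}_2$-action of $\Phi$ on $(\rank,\text{fibre degree})$. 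Feeding this back through the Chern-character formulas for $\Phi$ from \cite[Section 6.2.6]{FMNT} should express $\beta\cdot H_S$ in terms of the base-degrees of the rank-$\leq 1$ objects $\hat\Phi^0F$ and $\hat\Phi^1F$, yielding $\beta\cdot H_S\leq 0$ and the desired contradiction. I expect the genuinely delicate point to be exactly this bookkeeping: tracking the base component of $\ch_1$ (as opposed to the rank and fibre degree, which transform formally) through the Fourier--Mukai transform, and controlling the torsion of $\hat\Phi^0Q$ so that the four-term sequence produces clean numerical inequalities. An alternative route to the same bound, which I would keep in reserve, is to restrict the destabilising sequence to the elliptic surface $p^{-1}(C)$ over a general curve $C\in|H_S|$ and invoke the known stability of Fourier--Mukai transforms of line bundles on elliptic surfaces, since $\beta\cdot H_S$ is precisely the base-degree of $F|_{p^{-1}(C)}$.
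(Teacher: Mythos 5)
Your setup, the slope expansion, and the fibre-degree step (restriction to a general fibre plus integrality forcing $a\leq 0$) are correct and coincide with the paper's own proof, and your reduction of the proposition to the single inequality $\beta\cdot H_S\leq 0$ for every saturated subsheaf $F$ of intermediate rank is also exactly what the paper establishes (there it appears as $\ch_1(F)\cdot\Theta\cdot p^\ast H_S\leq 0$, split into the contributions of two pieces of $F$). The gap is your proposed proof of that inequality, and it is genuine, not deferred bookkeeping. Under the cohomological transform of \cite[Section 6.2.6]{FMNT}, the base component $\beta$ of $\ch_1(F)$ is \emph{not} a function of the ranks and fibre degrees of $\hat\Phi^0F$ and $\hat\Phi^1F$: numerically (using $K_S\equiv 0$) it is a difference of the $\Theta p^\ast H^2(S,\bQ)$-components of $\ch_2(\hat\Phi^0F)$ and $\ch_2(\hat\Phi^1F)$. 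So the $\mathrm{SL}_2$-type action on $(\mathrm{rank},\text{fibre degree})$, which is all your four-term sequence pins down, cannot yield the bound. Concretely, writing $\hat\Phi^0F=L_m(-D)\otimes\cI_Z$ with $D\equiv d\Theta+p^\ast\delta$ effective and $Z$ of codimension $2$, one finds (up to the overall sign convention of the inverse kernel)
\[
\beta\cdot H_S \;=\; \pm\Bigl((d-m)(\delta\cdot H_S)\;-\;[Z]\cdot p^\ast H_S\;-\;\ch_2(\hat\Phi^1F)\cdot p^\ast H_S\Bigr),
\]
where $(d-m)(\delta\cdot H_S)\geq 0$ and $[Z]\cdot p^\ast H_S\geq 0$ pull in opposite directions whichever sign is correct, and $\ch_2(\hat\Phi^1F)\cdot p^\ast H_S$ is a curve-class degree of a sheaf you know only as a quotient of $\hat\Phi^0Q$, with no effectivity or positivity statement available. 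Your fallback does not close this either: since $\beta\cdot H_S=\ch_1(F|_Y)\cdot\Theta_Y$ for $Y=p^{-1}(C)$, $C\in|H_S|$, $\Theta_Y=\Theta\cap Y$, you would need $\mu$-stability of $\wh{L_m}|_Y$ with respect to polarizations arbitrarily weighted toward the section $\Theta_Y$ (the proposition quantifies over all $t,s>0$), whereas the standard elliptic-surface results give stability only for fibre-dominated (``suitable'') polarizations.

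For comparison, the paper's device for precisely this point avoids $\hat\Phi$ entirely: it applies $\Phi$ to the structure sequence $0\to\cO_X(-n\Theta)\to\cO_X\to\cO_{n\Theta}\to 0$ to obtain the exact sequence \eqref{sd-eq1}, $0\to\wh{\cO_{n\Theta}}\to\wh{L_{-n}}\to\cO_\Theta\otimes p^\ast\omega\to 0$, and decomposes any subsheaf $F$ accordingly into $F''=F\cap\wh{\cO_{n\Theta}}$ and its image $F'\subseteq\cO_\Theta\otimes p^\ast\omega$. Then $\ch_1(F')$ is $0$ or $\Theta$, hence pairs to zero with $\Theta\cdot p^\ast H_S$, while $\wh{\cO_{n\Theta}}$ is filtered by line bundles, so $F''$ is filtered by twisted ideal sheaves and an effectivity argument gives $\ch_1(F'')\cdot\Theta\cdot p^\ast H_S\leq 0$; summing the two contributions gives exactly $\beta\cdot H_S\leq 0$ for all $t,s>0$. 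Replacing your $\hat\Phi$-bookkeeping by this decomposition is the missing idea needed to complete your argument.
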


\begin{proof}
Let $\omega$ be as in \eqref{def:omega}. As above, let us write $L_m$ to denote $\cO_X(m\Theta)$ for any  integer $q$.  Suppose $m$ is a  negative integer. From Lemma \ref{lem:LB0}, we know $\wh{L_{m}}$ is locally free.  Therefore, to prove that $\wh{L_{m}}$ is $\mu_\omega$-stable, it suffices to consider nonzero subsheaves $F \subsetneq \wh{L_{m}}$ where $0 < \rank F < \rank \wh{L_{m}}=|m|$ and check that $\mu_\omega (F) < \mu_\omega (\wh{L_{m}})$.

Under the assumption that $K_S$ is numerically trivial, we have from \eqref{eqn:L-aslope}
\begin{equation}\label{eqn:slopecomp0}
  \mu_\omega (\wh{L_{m}}) = \frac{s^2}{|m|} H_S^2, \end{equation}
which is strictly positive since $s>0$.

Now, let us write $n=-m$ and consider the structure short exact sequence in $\Coh (X)$
\begin{equation}\label{sd-eq4}
  0\to \cO_X (-n\Theta) \to \cO_X \to \cO_{n\Theta} \to 0
\end{equation}
which is taken by $\Phi$ to the following short exact sequence of sheaves
\begin{equation}\label{sd-eq1}
 0 \to \wh{\cO_{n\Theta}} \to \wh{\cO_X (-n\Theta)} \to \cO_{\Theta}\otimes p^\ast \omega \to 0.
\end{equation}
Writing $F'$ to denote the image of $F$ under the surjection $\wh{\cO_X(-n\Theta)} \twoheadrightarrow \cO_\Theta \otimes p^\ast \omega$ in \eqref{sd-eq1} and writing $F'' := \wh{\cO_{n\Theta}} \cap F$, we have a commutative diagram where both rows are short exact sequences and the vertical maps are all inclusions of sheaves:

\begin{equation}\label{eq60}
\xymatrix{
   0 \ar[r] & F'' \ar[r] \ar[d] & F \ar[r] \ar[d] & F' \ar[r] \ar[d] & 0 \\
  0 \ar[r] &  \wh{\cO_{n\Theta}} \ar[r] & \wh{\cO_X (-n\Theta)} \ar[r] & \cO_{\Theta} \otimes p^\ast \omega \ar[r] & 0 
}.
\end{equation}

Thus 
\begin{align}
  \mu_\omega (F)  &= \frac{\omega^2 \ch_1(F)}{\ch_0 (F)} \notag \\
  &= \frac{\ch_1(F)(t^2\Theta^2 + 2ts\Theta p^\ast H_S + s^2 p^\ast H_S^2)}{\ch_0 (F)} \notag \\
  &= \frac{\ch_1(F) (t^2\Theta^2 + 2ts\Theta p^\ast H_S)}{\ch_0 (F)} + \frac{\ch_1(F)\cdot s^2 p^\ast H_S^2}{\ch_0 (F)} \notag \\
  &= \frac{(\ch_1(F'')+\ch_1(F'))(t^2\Theta^2 + 2ts\Theta p^\ast H_S)}{\ch_0 (F)} + \frac{\ch_1(F) \cdot s^2 p^\ast H_S^2}{\ch_0 (F)}. \label{eqn:Fslope}
\end{align}
Since $L_{-n}$ is a line bundle, its restriction to the generic fiber of $p$ is a stable torsion-free sheaf; hence the restriction of $\wh{L_{-n}}$ to the generic fibr of $p$ is also a stable torsion-free sheaf.  Since $0<\rank F < \rank \wh{L_{-n}}$, we have 
\[
   \frac{\ch_1(F).f}{\ch_0 (F)} < \frac{\ch_1(\wh{L_{-n}})\cdot f}{\ch_0 (\wh{L_{-n}})} = \frac{1}{n}.
\]
Since $\ch_1(F)\cdot f$ is an integer, we must have $\ch_1(F)\cdot f \leq 0$.  Hence 
\begin{equation}\label{eqn:slopecomp1}
  \frac{\ch_1(F)\cdot s^2 p^\ast H_S^2}{\ch_0 (F)} \leq 0 < \frac{s^2}{n}  H_S^2.
\end{equation}

Now, that $\wh{\cO_{n \Theta}}$ has a filtration by line bundles in $\Coh (X)$ implies that, in $\Coh (X)$,  $F''$ has a filtration by twists by line bundles of ideal sheaves  of  proper closed subscheme of $X$.  Hence $-\ch_1(F'')$ is an effective divisor on $X$. 

We claim that for any effective divisor $D$ in $X$, we have
\begin{equation*}
  D \cdot (t^2\Theta^2 + 2ts\Theta \cdot p^\ast H_S) \geq 0.
\end{equation*}
Indeed, since $\Theta^2=0$, it suffices to show $D \cdot \Theta \cdot p^*H_S \geq 0$. Without loss of generality, we can assume that $D$ is irreducible. If $D=\Theta$, then $D \cdot \Theta \cdot p^*H_S = 0$; so let us  assume that $D$ meets $\Theta$ along some curve $C \subseteq \Theta$. Since $H_S$ is ample, $p^*H_S$ is also ample on $\Theta$, hence  $D \cdot \Theta \cdot p^*H_S = C \cdot p^*H_S \geq 0$.

By the conclusion of the previous paragraph, we now have
\begin{equation*}
  \ch_1(F'')(t^2\Theta^2 + 2ts\Theta \cdot p^\ast H_S) \leq 0.
\end{equation*}

As for $\ch_1(F')$, since $F'$ is a subsheaf of $\cO_\Theta \otimes p^\ast \omega$ and $\Theta$ is irreducible, we know $\ch_1(F')$  is either zero or $\Theta$.  Thus $\ch_1(F')(t^2\Theta^2 + 2ts\Theta \cdot p^\ast H_S) = 0$.

Combining the various inequalities in the last three paragraphs, we now have
\[
  \mu_\omega (F) \leq 0 < \mu_\omega (\wh{L_{-n}}),
\]
showing that $\wh{L_{-n}}$ is $\mu_\omega$-stable.
\end{proof}

\begin{theorem}\label{theorem:main1}
Suppose $p : X \to S$ is a  Weierstra{\ss}  threefold where $X$ is $K$-trivial and  $K_S$ is numerically trivial. Suppose $\omega$ is an ample class on $X$.    Then for any line bundle $M$ on $X$ of nonzero fiber degree, the transform $\wh{M}$ is a $\mu_\omega$-stable locally free sheaf.
\end{theorem}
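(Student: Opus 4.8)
The plan is to reduce the general statement to the special line bundles $L_a = \cO_X(a\Theta)$ already treated, and then to deduce the positive fiber degree case from the negative one by duality. First I would observe that, by assumption (iii), $\ns(X)_\bQ = \bQ\,\Theta \oplus p^\ast\ns(S)_\bQ$, so that the Mordell--Weil rank of $p$ vanishes; together with the integrality of the fibers of a \wf this lets me write any line bundle $M$ of nonzero fiber degree $a$ in the form $M \cong L_a \otimes p^\ast N$ for some $N \in \pic(S)$. The relevant compatibility is that the Fourier--Mukai functor commutes with tensoring by a pullback from the base: since $p \circ \pi_1 = p \circ \pi_2 = \pi$, the projection formula gives $\Phi(\cE \otimes p^\ast N) \cong \Phi(\cE) \otimes p^\ast N$ for all $\cE$. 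Consequently $M$ is $\Phi$-WIT$_i$ exactly when $L_a$ is, and $\wh{M} \cong \wh{L_a} \otimes p^\ast N$. Since tensoring by the line bundle $p^\ast N$ preserves both local freeness and $\mu_\omega$-stability (it merely shifts the slope of every subsheaf by the constant $\omega^2 \cdot p^\ast c_1(N)$), it suffices to prove that $\wh{L_a}$ is a $\mu_\omega$-stable locally free sheaf for every $a \neq 0$.

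Local freeness of $\wh{L_a}$ for $a \neq 0$ is exactly Lemma \ref{lem:LB0}, and the case $a < 0$ of stability is precisely Proposition \ref{prop:LB1}. So the only remaining point is $\mu_\omega$-stability of $\wh{L_a}$ when $a > 0$, and here I would argue by duality rather than redo the subsheaf estimate. Applying Proposition \ref{prop:wit1} to $E = L_{-a}$ (which is $\Phi$-WIT$_1$ of full support, with $\codim E = 0$ and $E^D = L_{-a}^\vee = L_a$), and noting that $\wh{L_{-a}}$ has positive rank so that $\dim \Phi^1 E = \dim E$, yields the sheaf identity $\iota^\ast(\wh{L_a}) \otimes p^\ast L = (\wh{L_{-a}})^\vee$, where I have used that $\wh{L_{-a}}$ is locally free to identify $(\Phi^1 E)^D$ with the honest dual. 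Since $\iota^2 = \id_X$ and $p \circ \iota = p$, this rearranges to
\[
  \wh{L_a} \cong \iota^\ast\bigl((\wh{L_{-a}})^\vee\bigr) \otimes p^\ast L^{-1}.
\]

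It then remains to check that each operation on the right preserves $\mu_\omega$-stability. The sheaf $\wh{L_{-a}}$ is $\mu_\omega$-stable and locally free by the previous paragraph, so its dual $(\wh{L_{-a}})^\vee$ is again $\mu_\omega$-stable. The pullback $\iota^\ast$ is an exact autoequivalence of $\coh(X)$ inducing a bijection on subsheaves and preserving rank, and it preserves the slope because $\iota^\ast\omega = \omega$: indeed $\iota \circ \sigma = \sigma$ forces $\iota^\ast\Theta = \Theta$, while $p \circ \iota = p$ gives $\iota^\ast p^\ast H_S = p^\ast H_S$, so $\iota^\ast(t\Theta + s\,p^\ast H_S) = \omega$. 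Finally, tensoring by $p^\ast L^{-1}$ preserves $\mu_\omega$-stability as in the first paragraph. Hence $\wh{L_a}$ is $\mu_\omega$-stable for $a > 0$, completing the reduction and the proof.

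I expect the main obstacle to be the bookkeeping in the duality step: one must keep careful track of the WIT-degrees and the cohomological shift so that the identity coming from Proposition \ref{prop:wit1} (equivalently, directly from Theorem \ref{thm:commutativity}) is an honest isomorphism of locally free sheaves rather than merely a statement about a single graded piece, and one must verify that every term involved (in particular $(\Phi^1 E)^D$) is genuinely the dual of a locally free sheaf. A secondary point requiring care is the reduction $M \cong L_a \otimes p^\ast N$, which relies on assumption (iii) to rule out a nontrivial Mordell--Weil contribution; once the numerical decomposition is in place, integrality of the fibers together with $p_\ast\cO_X = \cO_S$ is what upgrades it to an isomorphism of line bundles.
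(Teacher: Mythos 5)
Your proposal is correct and follows essentially the same route as the paper's own proof: reduce to $L_m=\cO_X(m\Theta)$ via assumption (iii) and the projection formula, invoke Proposition \ref{prop:LB1} (with Lemma \ref{lem:LB0}) for $m<0$, and obtain $m>0$ from Proposition \ref{prop:wit1} together with the fact that $\mu_\omega$-stability is preserved under dualizing. The only difference is one of detail: you explicitly verify the steps the paper leaves implicit, namely the identity $\wh{L_a} \cong \iota^\ast\bigl((\wh{L_{-a}})^\vee\bigr)\otimes p^\ast L^{-1}$ and that $\iota^\ast$ (since $\iota^\ast\omega=\omega$) and twisting by $p^\ast L^{-1}$ preserve stability.
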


\begin{proof}
By assumption (iii) in the beginning of Section \ref{subsec:setting},  any line bundle on $X$ is of the form $\cO_X (m\Theta)$ for some $m \in \mathbb{Z}$ up to tensoring by a line bundle pulled back from the base.  By projection formula, tensoring with a sheaf pulled back from the base $S$ commutes with the Fourier-Mukai functor $\Phi$.  As a result, it suffices to prove the theorem for line bundles of the form $\cO_X (m\Theta)$ where $m$ is a nonzero integer.  The case where $m<0$  is proved in Proposition \ref{prop:LB1}.  The case where $m>0$ follows from the case of $m<0$ together with Proposition \ref{prop:wit1}, and the fact that slope stability is preserved under taking the dual.
\end{proof}

\bibliographystyle{plain}
\bibliography{refsLZ3}{}

\end{document}